\newtheorem{theorem}{Theorem}[section]
\newtheorem{lemma}[theorem]{Lemma}
\newtheorem{proposition}[theorem]{Proposition}
\newtheorem{question}[theorem]{Question}
\newtheorem{corollary}[theorem]{Corollary}
\newtheorem{claim}[theorem]{Claim}
\theoremstyle{definition}
\newtheorem{definition}[theorem]{Definition}
\theoremstyle{remark}
\newtheorem{remark}[theorem]{Remark}
\numberwithin{equation}{section}
\begin{document}

\title{Definable types in algebraically closed valued fields}


\author{Pablo Cubides Kovacsics}
\address{Laboratoire Paul Painlev\'e,
Universit\'e Lille 1,
CNRS U.M.R. 8524,
59655 Villeneuve d'Ascq Cedex​,
 France}
\curraddr{}
\email{Pablo.Cubides@math.univ-lille1.fr}
\thanks{The first author was supported by the Marie Curie Initial Training Network in Mathematical Logic - MALOA
- From MAthematical LOgic to Applications, PITN-GA-2009-238381.}

\author{Fran\c{c}oise Delon}
\address{\'Equipe de Logique Math\'ematique,
Institut de Math\'ematiques de Jussieu - Paris Rive Gauche,
Universit\'e Paris Diderot,
UFR de math\'ematiques, case 7012,
75205 Paris Cedex 13,
France}
\curraddr{}
\email{delon@math.univ-paris-diderot.fr}
\thanks{}

\subjclass[2010]{Primary 12J10, 03C60, 13L05; 
Secondary 03C98.}

\date{22/04/2014}

\dedicatory{}


\begin{abstract}
Marker and Steinhorn shown in \cite{markerETAL:94} that given two models $M\prec N$ of an o-minimal theory, if all 1-types over $M$ realized in $N$ are definable, then all types over $M$ realized in $N$ are definable. In this article we characterize pairs of algebraically closed valued fields satisfying the same property. Although it is true that if $M$ is an algebraically closed valued field such that all 1-types over $M$ are definable then all types over $M$ definable, we build a counterexample for the relative statement, \textit{i.e.}, we show for any $n\geq 1$ that there is a pair $M\prec N$ of algebraically closed valued fields such that all $n$-types over $M$ realized in $N$ are definable but there is an $n+1$-type over $M$ realized in $N$ which is not definable. Finally, we discuss what happens in the more general context of $C$-minimality. 
\end{abstract}

\maketitle

\section{Introduction}

Let $L$ be a first order language, $\Sigma$ an $L$-theory and $M$ a model of $\Sigma$. For a subset $A\subseteq M$ and a type $p(x)\in S(A)$, recall that $p(x)$ is \emph{definable} if for every $L$-formula $\phi(x,y)$ there is an $L(A)$-formula $\delta_\phi(y)$ such that for all $a\in A^{|y|}$, we have that $\phi(x,a)\in p$ if and only if $M\models \delta_\phi(a)$ ($x$ and $y$ denote finite tuples). Stable theories are precisely those theories for which over \emph{every} model all types are definable. However, it is possible to find models of an unstable theory over which all types are also definable. For instance, by a result of van den Dries in \cite{vandendries:86}, all types over the real field are definable. The same is true for the $p$-adics by a result of Delon in \cite{delon:89}. Marker and Steinhorn generalized van den Dries result in \cite{markerETAL:94} showing that any o-minimal structure over which all 1-types are definable has all types definable. In fact, they proved a stronger result, namely, that given two models $M\prec N$ of an o-minimal theory, if all 1-types over $M$ realized in $N$ are definable, then all types over $M$ realized in $N$ are definable. Let us introduce some notation: 

\begin{definition}
Let $\Sigma$ be an $L$-theory, $M\prec N$ two models of $\Sigma$. For $n\geq 1$, we note $T_n(M,N)$ if for every $a\in N^n$, $tp(a/M)$ is definable. We use $T_n(M)$ to denote $T_n(M,N)$ with $N$ an $|M|^+$-saturated elementary extension of $M$ or, equivalently, to say that $M$ satisfies $T_n(M,N)$ for every elementary extensions $N$.
\end{definition}

What was previously stated can be summarized as:

\begin{theorem}\label{general1}
	Let $\Sigma$ be an $L$-theory. Then
	\begin{enumerate}[(1)]
	\item $\Sigma$ is stable if and only if $\displaystyle\bigwedge_{n\geq 1} T_n(M,N)$ for all models $M\prec N$ of $\Sigma$.
	\item If $\Sigma$ is o-minimal and $M\prec N$ are two models of $\Sigma$ then $T_1(M,N)$ implies $\displaystyle\bigwedge_{n\geq 1} T_n(M,N)$.
	\end{enumerate}
\end{theorem}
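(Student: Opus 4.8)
The plan is to handle the two parts separately: (1) is a restatement of a classical characterisation of stability, while (2) is the Marker--Steinhorn theorem, and for (2) I would reproduce the main line of their argument. For (1), the first point is that, with $M\models\Sigma$ fixed, the conjunction $\bigwedge_{n\geq 1}T_n(M,N)$ over all $N\succ M$ is equivalent to saying that every complete type in finitely many variables over $M$ is definable, since every such type is realised in some elementary extension of $M$. So the right-hand side of (1) is exactly: every finitary type over every model of $\Sigma$ is definable. From there I would invoke the standard equivalence: stability implies definability of every type over every set, and conversely, if every finitary type over every model is definable, a cardinal-arithmetic bound on the number of defining schemata over a model $M$ gives $|S_n(M)|\leq\lambda$ whenever $|M|\leq\lambda=\lambda^{|\Sigma|}$, whence $\lambda$-stability and thus stability. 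There is no real obstacle here; it is bookkeeping together with a citation to the basic stability theory.

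For (2) the plan is induction on $n$, with $n=1$ being the hypothesis $T_1(M,N)$. Assuming $T_n(M,N)$, take $a=(a',a_{n+1})\in N^{n+1}$ with $a'\in N^n$; by the inductive hypothesis $tp(a'/M)$ is definable, and I must produce, for each $L$-formula $\phi(x',x_{n+1},y)$, an $L(M)$-formula defining $\{m\in M^{|y|}:N\models\phi(a',a_{n+1},m)\}$. Applying o-minimal cell decomposition to $\phi$ in the variable $x_{n+1}$, one gets finitely many $\emptyset$-definable functions $g_1,\dots,g_k$ of $(x',y)$ and a quantifier-free order condition $\theta$ with $\phi(c,t,m)\leftrightarrow\theta\bigl(t,g_1(c,m),\dots,g_k(c,m)\bigr)$ for all $(c,t,m)$. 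Hence whether $\phi(a',a_{n+1},m)\in tp(a/M)$ depends only on the position of $a_{n+1}$ among the elements $g_i(a',m)\in N$, and it is enough to define, uniformly in $i$ and in $m$, the truth of $a_{n+1}\leq g_i(a',m)$; equivalently, to describe the Dedekind cut that $a_{n+1}$ realises over the subset $\{g_i(a',m):i\leq k,\ m\in M^{|y|}\}$ of $N$ --- a set that is \emph{not} contained in $M$.

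The hard part, which is the technical core of Marker--Steinhorn, is precisely to reduce this last problem to the one-variable hypothesis $T_1(M,N)$. Fixing $i$ and $m$ and writing $h(x')=g_i(x',m)$, the idea is to use the monotonicity theorem and uniform finiteness in the o-minimal structure to analyse how the definable function $h$ transports the type $tp(a'/M)$ (definable by the inductive hypothesis), and so to rephrase the comparison of $a_{n+1}$ with $h(a')$ in terms of the cut over $M$ of a single element of $N$, which $T_1(M,N)$ makes definable; the delicate point is to carry this out uniformly in $m$ and simultaneously for the whole finite configuration $g_1,\dots,g_k$. I expect this passage --- from the definability of cuts of single elements to the definability of the many-point configuration --- to be the main obstacle, and I would follow \cite{markerETAL:94} for it rather than attempt an independent proof.
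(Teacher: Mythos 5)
The paper offers no proof of this theorem at all: it simply attributes part (1) to Shelah (citing Pillay's book for a proof) and part (2) to the Marker--Steinhorn theorem, which is exactly what your proposal does, modulo the sketches you add before deferring to \cite{markerETAL:94} for the technical core. Your sketches are consistent with the standard arguments (one minor imprecision: in a stable theory every type over a \emph{model} is definable over that model, whereas over an arbitrary set one only gets definability over its $\mathrm{acl}^{\mathrm{eq}}$ --- but only models are needed here), so the proposal is correct and takes essentially the same approach as the paper.
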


Part (1) goes back to Shelah (a proof of this can be found in \cite{pillay:83}). Part (2) corresponds to the Marker-Steinhorn theorem proved in \cite{markerETAL:94} and reproved in \cite{pillay:94} by Pillay. Notice that if for all models $M\prec N$ of a theory $\Sigma$ it is true that $T_1(M,N)$ implies $T_n(M,N)$ for all $n\geq 1$, then it is true in particular that $T_1(M)$ implies $T_n(M)$ for all $n\geq 1$. As a consequence, part (2) implies that a model of an o-minimal theory over which all 1-types are definable is a model over which all types are definable. Counterexamples to possible generalizations of (2) are the following:

\begin{theorem}[Baizhanov] There is a weakly o-minimal theory $\Sigma$ and models $M\prec N$ of $\Sigma$ such that $T_1(M,N)$ but not $T_2(M,N)$.
\end{theorem}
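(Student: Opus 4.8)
The plan is to exhibit a concrete weakly o-minimal, non-o-minimal theory together with a pair $M\prec N$ in which every $1$-type over $M$ realized in $N$ is definable, but a suitable $2$-type over $M$ realized in $N$ codes, through a formula $\psi(x,y,z_1,\dots,z_k)$ with $k\geq 2$, a subset of $M^k$ that is not definable over $M$. The crucial point is that weak o-minimality controls only definable subsets of the \emph{line}: unlike in the o-minimal case, where cell decomposition propagates one-variable tameness to all dimensions (this is what makes Theorem~\ref{general1}(2) work), in a merely weakly o-minimal structure a definable subset of $M^k$ for $k\geq 2$ can be genuinely wild even when all of its one-dimensional sections are finite unions of convex sets. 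It is exactly this failure of propagation that I would exploit; the r\^ole of passing from $M$ to a larger $N$ is to make the relevant wild subset of $M^k$ appear as $\psi(a,b,M^k)$ for a pair $(a,b)\in N^2$ while keeping $N$ ``thin'' enough that no wild $1$-type over $M$ is realized.

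For the structure I would take $\Sigma=\mathrm{RCVF}$, the theory of a real closed field equipped with a proper nontrivial convex valuation ring, with value group $\Gamma$; this theory eliminates quantifiers and is weakly o-minimal but not o-minimal (for instance $\{x:v(x)=v(c)\}$ is a union of two convex sets but of no finite union of points and intervals). An essentially equivalent, slightly more elementary choice is an ordered $\mathbb{Q}$-vector space expanded by predicates for a chain of convex subgroups, or any ``valuational'' expansion of a dense linear order that one can show to have quantifier elimination; the argument is the same in spirit, and runs in parallel with the construction carried out later in this paper for algebraically closed valued fields.

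I would then proceed as follows. \textbf{(1)} Describe the space of $1$-types over an arbitrary model $M$: by quantifier elimination, $\mathrm{tp}(b/M)$ is determined by the cut of $b$ in $M$ (equivalently, the family of $M$-balls and $M$-intervals containing $b$) together with finitely much additional residue-field and value-group data, and such a type is definable precisely when the corresponding cut is an $M$-definable convex set, by the standard criterion for definability of types. \textbf{(2)} Choose $M$ small and $N=M\langle a,b\rangle$ generated over $M$ by two elements so that each of $a$ and $b$ realizes a definable cut over $M$ (so that, after checking also the finitely many $\Gamma$- and residue-field parameters, every $1$-type over $M$ realized in $N$ is definable and $T_1(M,N)$ holds), while the pair $(a,b)$ is chosen ``generic'' enough that the set
\[
X=\{(c_1,c_2)\in M^2 : \psi(a,b,c_1,c_2)\}
\]
is not an $M$-definable subset of $M^2$, where $\psi$ compares the two valuative distances $v(z_1-a)$ and $v(z_2-b)$ (or a similar non-linear, genuinely two-variable configuration). \textbf{(3)} Conclude: $T_1(M,N)$ holds by (1)--(2), while the non-$M$-definability of $X$ shows, via the formula $\psi$, that $\mathrm{tp}(a,b/M)$ is not definable, so $T_2(M,N)$ fails.

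The main obstacle is Step~(2): one must arrange $M$ and the pair $(a,b)$ so that the two requirements genuinely coexist --- every $1$-type over $M$ that $N$ realizes is tame, yet the joint $2$-type is wild. This forces a careful analysis of which one-variable cuts over $M$ are ``visible'' in $N$ (a single coordinate of $N$ must never encode a non-$M$-definable cut), together with a verification that the wildness of $X$ really does require simultaneous access to both coordinates --- that it is not already witnessed by some formula in one variable alone, where weak o-minimality would make it tame. Setting $M$ up so that this analysis goes through cleanly is the technical heart of the argument.
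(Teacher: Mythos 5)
There is a genuine gap: what you have written is a program, not a proof. The paper itself gives no proof of this statement --- it is quoted from Baizhanov and the reader is referred to \cite{baizhanov:06} --- so the only question is whether your sketch would actually close the argument, and it does not: Step~(2), which you yourself identify as ``the technical heart,'' is exactly the construction that constitutes the theorem, and it is left entirely undone. You never exhibit $M$, never specify $a$ and $b$, never verify that every $1$-type over $M$ realized in $M\langle a,b\rangle$ is definable, and never prove that your set $X$ fails to be $M$-definable. Asserting that weak o-minimality ``only controls the line'' is a heuristic for why a counterexample is not forbidden; it is not evidence that one exists.

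Moreover, the one concrete ingredient you do propose is suspect. A formula $\psi$ comparing $v(z_1-a)$ with $v(z_2-b)$ decomposes, over a theory with quantifier elimination like RCVF, into conditions on $a$ and $b$ \emph{separately} together with value-group data: whether $v(c_1-a)\geq v(c_2-b)$ is determined by the two elements $v(c_1-a)$ and $v(c_2-b)$ of the value group, so if both $1$-types $\mathrm{tp}(a/M)$, $\mathrm{tp}(b/M)$ are definable and the value-group extension is tame (which your Step~(1)--(2) requires), this particular $X$ will in fact be $M$-definable, and your witness evaporates. The mechanism that actually produces such examples --- and this is precisely what the present paper does for ACVF in Proposition~\ref{counterexample} via the failure of separatedness --- is an \emph{entangled} linear dependence of the form $b=fa+g$ with $f,g$ coming from an immediate extension, arranged so that neither coordinate alone realizes a wild cut but the set $I_2(b,1,a;K)$ of values $v(b+e_2+e_1a)$ is cofinal in an undefinable cut of the value group. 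If you want a self-contained proof of Baizhanov's statement along valuational lines, you should import that construction (or cite \cite{baizhanov:06}); as it stands your sketch neither does the work nor points at the right formula.
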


\begin{theorem}[Chernikov-Simon]\label{simoncherni} There is a NIP theory $\Sigma$ and a model $M$ of $\Sigma$ such that $T_1(M)$ but not $T_2(M)$.
\end{theorem}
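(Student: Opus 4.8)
The plan is to exhibit a concrete example, modelled on Baizhanov's weakly o-minimal one but repaired so that $M$ works simultaneously against every elementary extension. The useful reformulation to keep in mind is that, for $N\succ M$ sufficiently saturated, $T_n(M)$ says exactly: every subset of every $M^k$ that is externally definable using at most $n$ external parameters is already $M$-definable. So the task is to produce a NIP theory $\Sigma$ and a model $M$ in which every externally definable set cut out by a single external element is $M$-definable, while some set cut out by a \emph{pair} of external elements is not.

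I would look for $\Sigma$ among theories of linearly ordered structures carrying light auxiliary structure: a dense linear order together with a convex equivalence relation $E$ whose classes are themselves densely ordered (a ``fibered'' order), enriched by one ternary relation $R(x_1,x_2,y)$ linking the two coordinates, chosen so that the theory still admits quantifier elimination. NIP can then be verified by a direct VC-dimension bound on the finitely many atomic formulas; in fact the order-plus-$E$ reduct is dp-minimal, so only the interaction with $R$ needs real care. The model $M$ should be Dedekind complete in the relevant sense — take the base order and every fibre isomorphic to $(\mathbb{R},<)$. Establishing $T_1(M)$ is then bookkeeping: by quantifier elimination a set externally definable from a single element is a finite Boolean combination of ``cuts'' in the base and in a fibre, and Dedekind completeness of $M$ makes each such cut $M$-definable, so every $1$-type over $M$ is definable. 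The failure of $T_2(M)$ is where $R$ earns its keep: one picks a pair $(a,b)$ in $N$ for which $\{m\in M : R(a,b,m)\}$ is a non-$M$-definable subset of some fibre — a ``new'' cut, or a dense–codense-looking set — that $M$ cannot realize, \emph{even though} $a$ and $b$ are each individually tame over $M$. The non-definability is thus genuinely two-dimensional.

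The main obstacle is to make these two demands coexist. The first forces $M$ to be so complete that nothing one-dimensional escapes it; the second needs some two-dimensional external set to escape; and at the same time the relation $R$ responsible for the escape must be added without destroying NIP or quantifier elimination. Concretely, the hard part is the design and analysis of $R$: it must be ``invisible'' in one variable (so that one-element external sets stay $M$-definable) yet ``see'' a non-definable set in two variables, and this must be reconciled with the quantifier-elimination and VC computations. This is also why one cannot stay weakly o-minimal: in Baizhanov's setting one may choose the pair $M\prec N$ so that $N$ realizes only well-behaved $1$-types over $M$, whereas here $M$ must cope with all its elementary extensions uniformly, and paying for that uniformity is precisely what reduces weak o-minimality to mere NIP. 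Once a suitable $R$, $M$ and pair $(a,b)$ are fixed, checking that the relevant $\phi$-definition is not $M$-definable while every $1$-type remains definable should be routine.
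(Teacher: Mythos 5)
There is a genuine gap: your proposal never actually produces the object whose existence the theorem asserts. The statement is a pure existence claim, so its entire mathematical content is a concrete construction of $\Sigma$, of $M$, and of the offending pair, together with the verifications of NIP, of $T_1(M)$, and of the failure of $T_2(M)$. Your text correctly reformulates $T_n(M)$ in terms of externally definable sets with $n$ external parameters and correctly identifies the design constraints (a relation $R$ that is ``invisible'' in one variable but produces a non-definable trace in two), but then explicitly defers the one step that matters --- ``the hard part is the design and analysis of $R$'' --- without carrying it out. No candidate $R$ is written down, no quantifier elimination is proved, no VC bound is computed, and no specific pair $(a,b)$ with a non-definable trace is exhibited. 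A reader cannot extract from this a theory $\Sigma$ and a model $M$, so as it stands this is a research plan, not a proof. (For what it is worth, the paper itself does not prove this theorem either; it attributes it to Chernikov--Simon and cites their article for the construction, so the honest options here are either to cite that source or to actually build and verify an example along the lines you sketch.)

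One smaller caution: your closing claim that Dedekind completeness of the base order and the fibres automatically yields $T_1(M)$ is not bookkeeping until $R$ is fixed, since a single external parameter substituted into $R(x_1,x_2,y)$ already cuts out subsets of $M^2$, and their $M$-definability is exactly the kind of thing your as-yet-unspecified $R$ could destroy. So even the ``easy'' half of the verification depends on the missing construction.
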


Their proofs can be found in \cite{baizhanov:06} and \cite{chernikovETAL:12} respectively. It is then natural to ask if it is still possible to find analogues of Marker-Steinhorn's theorem in other subclasses of NIP, for instance in $C$-minimal structures. 

\begin{definition} Let $C(x, y, z)$ be a ternary relation. A \emph{$C$-set} is a structure $(M,C)$
satisfying axioms (C1)-(C4):

\begin{enumerate}
\item[(C1)] $\forall xyz (C(x, y, z)\rightarrow C(x, z, y))$,
\item[(C2)] $\forall xyz (C(x, y, z)\rightarrow \neg C(y, x, z))$,
\item[(C3)] $\forall xyzw (C(x, y, z)\rightarrow(C(w, y, z)\vee C(x, w, z)))$,
\item[(C4)] $\forall xy (x \neq y \rightarrow C(x, y, y))$,
\item[(D)] 	$\forall xy (x \neq y \rightarrow \exists z(z\neq y \wedge C(x,y,z)))$.
\end{enumerate}	

If in addition $(M,C)$ satisfies axiom (D) we say it is a \emph{dense} $C$-set. A \emph{$C$-structure} is any expansion of a $C$-set. 
\end{definition}

\begin{definition} A $C$-structure $M$ is \emph{$C$-minimal} if for every elementary equivalent structure $N\equiv M$, every definable subset $D\subseteq N$ is definable by a quantifier free formula using only the $C$-predicate. A complete theory is \emph{$C$-minimal} if it has a $C$-minimal model.
\end{definition}

O-minimal and $C$-minimal structures have both differences and similarities. If a linearly ordered structure $M$ is such that any definable subset $D\subseteq M$ is a finite union of intervals and points then this is also true for any structure $N$ elementary equivalent to $M$. In contrast, there are $C$-minimal structures $M\equiv N$ such that in $M$ every definable subset $D\subseteq M$ is a Boolean combination of quantifier free $\{C\}$-formulas but this is not true in $N$. In addition, $C$-minimal theories do not need to satisfy the exchange property nor need to have prime models over sets, two properties satisfied by o-minimal theories (see \cite{macphersonETAL:96}). Nevertheless, there is a cell decomposition theorem for dense $C$-minimal structures together with a well-behaved notion of topological dimension, both introduced by Haskell and Macpherson in \cite{macphersonETAL:94} (see also \cite{cubides:14}). As previously stated, it is thus natural to ask the following questions: 

\begin{question}\label{relativea}[Solved] Let $\Sigma$ be a $C$-minimal theory and $M\prec N$ models of $\Sigma$. Is it true that $T_1(M,N)$ implies $T_n(M,N)$ for all $n\geq 1$?
\end{question}
 
\begin{question}\label{absoluea}[Open] Let $\Sigma$ be a $C$-minimal theory and $M$ a model of $\Sigma$. Is it true that $T_1(M)$ implies $T_n(M)$ for all $n\geq 1$?
\end{question}

In what follows we will give a negative answer to question \ref{relativea} by studying pairs $M\prec N$ of algebraically closed non-trivially valued fields (whose common theory is denoted by ACVF). By quantifier elimination, one can show that algebraically closed valued fields are $C$-minimal with respect to the $C$-relation defined by $C(x,y,z)\Leftrightarrow v(x-y)<v(y-z)$. In this context where the $C$-relation is given by an ultrametric, $C$-minimality is equivalent to the fact that any definable subset of the structure is a Boolean combination of open or closed balls. We prove the following theorem (all terms to be defined): 

\begin{theorem}\label{charACVF}
Let $(K\prec L,v)$ be to models of $ACVF$. The following are equivalent
\begin{enumerate}
	\item $T_n(K,L)$ holds for all $n\geq 1$.
	\item $(K\subseteq L,v)$ is a separated extension and $T_1(vK,vL)$ holds.
\end{enumerate}
\end{theorem}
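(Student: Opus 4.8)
The approach relies on the structure of $\mathrm{ACVF}$ as a theory with two orthogonal, stably embedded auxiliary sorts: the value group $\Gamma$, which carries the pure structure of a divisible ordered abelian group (hence is $o$-minimal), and the residue field $k$, which carries the pure structure of an algebraically closed field (hence is stable). Since $K\prec L$ we have $\Gamma K\prec\Gamma L$ and $\bar K\prec\bar L$ in these sorts. By Theorem~\ref{general1}(1), $\mathrm{ACF}$ being stable, $T_m(\bar K,\bar L)$ holds for every $m$; and by Theorem~\ref{general1}(2) applied to the $o$-minimal theory of divisible ordered abelian groups, $T_1(\Gamma K,\Gamma L)$ is equivalent to $\bigwedge_{m\geq 1}T_m(\Gamma K,\Gamma L)$. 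So condition~(2) says exactly that the value-group part of the extension is as tame as possible while the residue-field part costs nothing, and the role of separatedness is to guarantee that the full type of a tuple $a\in L^n$ splits into its $\Gamma$-part and its $k$-part.

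For $(2)\Rightarrow(1)$, fix $a\in L^n$; by quantifier elimination $\mathrm{tp}(a/K)$ is determined by the isomorphism type over $K$ of the valued field $\mathrm{acl}(Ka)=K(a)^{\mathrm{alg}}$. The plan is to use separatedness of $L/K$ to produce a normal form for this valued field: choosing, compatibly at every finite level, separating bases, one extracts finitely many elements of $L$ whose residues generate $\overline{K(a)^{\mathrm{alg}}}$ over $\bar K$ (a tuple $\bar\xi$ from $\bar L$) and finitely many whose values generate $\Gamma(\mathrm{acl}(Ka))$ over $\Gamma K$ (a tuple $\bar\gamma$ from $\Gamma L$), and, crucially, separatedness forces the valuation of \emph{every} $K$-polynomial in $a$ (not just every $K$-linear combination) to be recovered from $\bar\gamma$, $\bar\xi$ and parameters in $K$. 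Consequently $\mathrm{tp}(a/K)$ is interdefinable over $K$ with the pair $\bigl(\mathrm{tp}(\bar\gamma/\Gamma K),\mathrm{tp}(\bar\xi/\bar K)\bigr)$; the first type is definable by $T_1(\Gamma K,\Gamma L)$ together with the Marker--Steinhorn theorem, the second because $\mathrm{ACF}$ is stable, and definability in each coordinate yields a definable $\mathrm{tp}(a/K)$. Hence $T_n(K,L)$ holds for every $n$.

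For $(1)\Rightarrow(2)$, that $T_1(\Gamma K,\Gamma L)$ holds is straightforward: given $\lambda\in\Gamma L$ write $\lambda=v(a)$, $a\in L$; for a formula $\psi$ in the language of ordered groups the $\mathrm{ACVF}$-formula $\phi(x;\bar y):=\psi(v(x);v(\bar y))$ has an $L(K)$-defining scheme because $\mathrm{tp}(a/K)$ is definable, and, since this scheme depends only on the values of its parameters and $\Gamma$ is stably embedded with pure induced structure, it descends to a definition of $\{\bar\eta\in\Gamma K:\psi(\lambda,\bar\eta)\}$ over $\Gamma K$. For separatedness one argues by contraposition: if $L/K$ is not separated, choose a finite-dimensional $K$-subspace $V$ of $L$ with no separating basis and a maximal separated subspace $W\subsetneq V$; then for any $b\in V\setminus W$ the subspace $W+Kb$ is not separated, which forces $S:=\sup\{v(b-w):w\in W\}$ to be an \emph{unattained} supremum. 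Using this, together with the already-established $T_1(\Gamma K,\Gamma L)$ to locate the cut $S$ relative to $\Gamma K$, one shows that $\mathrm{tp}(b,e_1,\dots,e_d/K)$ is not definable (where $e_1,\dots,e_d$ is a separating basis of $W$): were it definable, its canonical extension to a sufficiently saturated $M\succ K$, realized by some $b'$, would exhibit a $K$-definable ball $\bar B(b',\gamma)$ that is nonempty in $M$ but, by the unattained supremum, has no point in $K$, contradicting $K\prec M$.

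I expect the main obstacle to be the normal-form step in $(2)\Rightarrow(1)$: turning ``separatedness'' into a usable presentation of $\mathrm{acl}(Ka)$ that controls the valuation of arbitrary $K$-polynomials in $a$, handling the passage to the algebraic closure and the bookkeeping between the $\Gamma$- and $k$-generators; this is where one must draw on (or reprove) the structure theory of separated extensions of valued fields. A secondary difficulty, in $(1)\Rightarrow(2)$, is organising the various positions $S$ can occupy relative to $\Gamma K$ (an element of $\Gamma K$, a proper cut of $\Gamma K$, or $+\infty$) so that the final ``$K$-definable set empty in $K$ but not in $M$'' contradiction applies uniformly.
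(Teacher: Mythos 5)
Your proposal for $(1)\Rightarrow(2)$ is broadly in the spirit of the paper's argument: the paper also works by contraposition from non-separatedness, using Delon's characterization (separated iff every set $I_n(x,y_1,\dots,y_n;K)=\{v(x+\sum e_iy_i):e_i\in K\}$ has a maximum) to extract an unattained supremum, and then derives a contradiction with the definable maximality (spherical completeness for definable families) of $ACVF$ rather than directly with $K\prec M$. Your sketch omits one step the paper needs: a reduction to the case $vK=vL$ (via the convex-subgroup decomposition of Proposition \ref{doag} and the quotient lemma \ref{sepquotient}), without which the values $v(b-w)$ need not interact with $\Gamma K$ in the way your cut argument assumes; and it omits the construction of the pseudo-Cauchy sequences of coefficients with no pseudo-limit in $K$, which is what actually produces the definable nested family with empty intersection. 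These are fixable, but they are real work, not bookkeeping.

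The direction $(2)\Rightarrow(1)$ is where your proposal has a genuine gap, and you flag the right spot yourself. The claim that $\mathrm{tp}(a/K)$ is ``interdefinable over $K$ with the pair $(\mathrm{tp}(\bar\gamma/\Gamma K),\mathrm{tp}(\bar\xi/\bar K))$'' is neither proved nor the right statement: a type over $K$ is not determined by the types of value-group and residue-field generators of the extension it generates (the $K$-parameters enter essentially, e.g.\ through centers of balls), and making such a splitting precise is the content of the stable domination machinery, far more than is needed here. The paper's actual mechanism is much more direct and bypasses the residue field entirely: by quantifier elimination one reduces to showing that sets of the form $\{x\in K^n: v(P(x))\geq v(Q(x))\}$ with $P,Q\in L[X]$ are $K$-definable; one takes a separated basis $l_1,\dots,l_m$ of the $K$-span of the \emph{coefficients} of $P$ and $Q$, writes $P=\sum l_ip_i$, $Q=\sum l_iq_i$ with $p_i,q_i\in K[X]$, and observes that for $x\in K^n$ the values $P(x),Q(x)$ are $K$-linear combinations of the $l_i$, so that $v(P(x))=\min_i v(l_ip_i(x))$. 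Everything then reduces to finitely many conditions $v(l_i)+v(q(x))\leq v(l_j)+v(p(x))$, i.e.\ to externally definable subsets of $(vK)^2$ with parameters $v(l_i)\in vL$, which are $vK$-definable by $T_1(vK,vL)$ and Marker--Steinhorn for divisible ordered abelian groups. If you want to salvage your ``normal form'' idea, the workable dual version is to take a separated basis of the $K$-span of the monomials in $a$ up to each degree; but as written, the key step of your plan is asserted rather than established.
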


Using this theorem we show : 

\begin{proposition}\label{counterexample}
	For any $n\geq 1$ there is an extension $K\prec L$ of models of $ACVF$ such that $T_n(K,L)$ holds but $T_{n+1}(K,L)$ does not.
\end{proposition}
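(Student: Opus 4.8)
The plan is to run the proof of Theorem~\ref{charACVF} ``with parameters'', extracting from it a graded equivalence, and then to exhibit extensions that separate two consecutive levels of the resulting hierarchy. Recall that an extension $K\subseteq L$ of valued fields is \emph{separated} if every finite-dimensional $K$-subspace of $L$ has a separated basis; call it \emph{$d$-separated} if this holds for all subspaces of dimension at most $d$. Inspecting the proof of Theorem~\ref{charACVF}, one sees that its induction in fact establishes, for each $n\geq 1$, that $T_n(K,L)$ holds if and only if $(K\subseteq L)$ is $(n+1)$-separated and $T_1(vK,vL)$ holds (the precise constant linking $n$ to the dimension bound is to be read off from the proof; since ``$d$-separated for every $d$'' is exactly ``separated'', taking the conjunction over all $n$ recovers Theorem~\ref{charACVF}). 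The first step is thus to isolate this refined statement.

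Granting it, Proposition~\ref{counterexample} reduces to producing, for every $d\geq 2$, an extension $K\prec L$ of models of $ACVF$ with $vK=vL$ which is $d$-separated but not $(d+1)$-separated: the equality $vK=vL$ makes $T_1(vK,vL)$ automatic, so such an extension satisfies $T_n(K,L)$ and not $T_{n+1}(K,L)$ for the appropriate $n$. Moreover, once $vL=vK$ and, say, $kL=kK$, quantifier elimination for $ACVF$ yields $K\prec L$ at no cost, so this last requirement may be imposed freely.

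To build such an $L$, I would start from a fixed non-maximally complete algebraically closed valued field $K$ --- for instance the Puiseux series $\mathbb{C}\{\{t\}\}$, with $vK=\mathbb{Q}$ and $kK=\mathbb{C}$ --- which carries pseudo-Cauchy sequences of transcendental type with no pseudo-limit in $K$. The key subtlety is that adjoining a single such pseudo-limit $\alpha$ already destroys $2$-separatedness ($K\oplus K\alpha$ has no separated basis, precisely because $K(\alpha)/K$ is immediate and so no rescaling of a translate $\alpha-c$ can be a best approximation to $0$), so the failure of $(d+1)$-separatedness cannot come from such an $\alpha\in L$ and must instead be produced ``diagonally''. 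Concretely, one chooses transcendental elements $e_1,\dots,e_d\in L$ and an element $y\in L$ such that $y$ is a pseudo-limit of a pseudo-Cauchy sequence whose terms lie in the $d$-dimensional subspace $\langle e_1,\dots,e_d\rangle_K$ --- which forces the $(d+1)$-dimensional subspace $\langle e_1,\dots,e_d,y\rangle_K$ to have no separated basis --- while arranging, using the transcendence of $e_1,\dots,e_d$ and of the chosen generators of $L$ over $K$, that the ``coordinate'' pseudo-Cauchy sequences involved do not themselves have pseudo-limits in $L$, so that no $K$-subspace of $L$ of dimension $\leq d$ inherits a defect. One then takes $L$ to be the algebraic closure of $K(e_1,\dots,e_d,y)$ with the unique extension of $v$ and checks $vL=vK$, $kL=kK$. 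Iterating, i.e. nesting $d$ such approximation processes, yields the borderline example for each $d$.

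The hard part is exactly the construction just sketched: one must make the failure of separatedness occur at dimension \emph{precisely} $d+1$, which requires a workable criterion for non-separatedness of a given finite-dimensional subspace --- in terms of the behaviour of $v$ on all its bases, equivalently in terms of attained versus non-attained distances along a flag of subspaces --- together with a choice of $e_1,\dots,e_d,y$ delicate enough that the only defect created is the intended $(d+1)$-dimensional one. A secondary, more routine point is to check that the proof of Theorem~\ref{charACVF} really does deliver the graded equivalence above, with an explicit relation between $n$ and the dimension bound, rather than only its conjunction over all $n$.
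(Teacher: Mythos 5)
The central step of your plan --- the ``graded equivalence'' $T_n(K,L)$ iff $(K\subseteq L)$ is $(n+1)$-separated and $T_1(vK,vL)$ --- is not what the proof of Theorem~\ref{charACVF} establishes, and the direction you actually need ($(n+1)$-separated $\Rightarrow T_n$) does not follow from it. In the proof of $2\Rightarrow 1$ one takes a separated basis of the $K$-subspace spanned by the \emph{coefficients of the polynomials} $P,Q$ occurring in an atomic formula $v(P(x))\geq v(Q(x))$; for the definability of $tp(a/K)$ with $a\in L^n$ these coefficients are the monomials $a^\nu$ of arbitrarily high degree, so one needs separated bases for subspaces of $K[a]$ of \emph{unbounded} linear dimension, not just dimension $n+1$ (already for $n=1$, all the subspaces $\langle 1,a,\dots,a^m\rangle_K$ must be handled; the case $n=1$ is only rescued by the separate classification of $1$-types in Corollary~\ref{T1}). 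The grading that does work, and that the paper uses, is by \emph{transcendence degree}: $tp(a/K)$ is definable iff the subextension $K\subseteq K(a)^{a}$ is separated and $T_1(vK,vK(a))$ holds, which one gets by applying Theorem~\ref{charACVF} to the pair $K\prec K(a)^{a}$. Accordingly, the counterexample must be an $L$ with $td(L;K)=n+1$, $vL=vK$, $(K\subseteq L)$ not separated, but $(K\subseteq M)$ separated for \emph{every} intermediate field $M$ with $td(M;K)\leq n$ --- a condition on subfields, not on $(d+1)$-dimensional subspaces of $L$.

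There are two further problems. First, the side conditions you propose to impose, $vL=vK$ \emph{and} $kL=kK$, would make $L/K$ an immediate extension; since $K$ is algebraically closed, every $a\in L\setminus K$ would then be a limit point over $K$, and $T_1(K,L)$ would already fail by Proposition~\ref{limitnotdef}. In the paper's example the residue field genuinely grows ($L/v=k(t_1/v,\dots,t_n/v)^{a}$), and this growth is precisely what makes the subextensions of transcendence degree $\leq n$ separated, via Corollary~\ref{deloncoro}. (Elementarity $K\prec L$ is free from model completeness of $ACVF$ and needs no such hypotheses.) Second, the construction itself --- arranging that the only defect occurs at exactly the intended level --- is the entire content of the proposition, and you leave it as ``the hard part''. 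The paper realizes your ``diagonal'' idea concretely by setting $t_{n+1}=\sum_{i=1}^n f_it_i+f_{n+1}$ with $f_1,\dots,f_{n+1}\in k((X))$ algebraically independent over $k(X)$ and $t_1/v,\dots,t_n/v$ algebraically independent over $k$; non-separatedness in transcendence degree $n+1$ is read off from $I_{n+1}(t_{n+1},1,t_1,\dots,t_n;K)$ having no maximum, while separatedness of every $M$ with $td(M;K)\leq n$ is proved through linear disjointness and a field-of-definition argument (Theorem~\ref{delon2}, Corollary~\ref{deloncoro}), none of which appears in your sketch.
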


Note that the construction of this example is suggested in \cite{baur:82}. It is also closely related to the ``necessity of geometric sorts'' for elimination of imaginaries in algebraically closed valued fields (see \cite{haskellETAL:06} 3.5). Also, recall that question \ref{absoluea} restricted to models of $ACVF$ has a positive answer (see \cite{delon:89} or theorem \ref{absolute} in the present article). The article will be divided as follows: section 1 is devoted to the proof of theorem \ref{charACVF}; in section 2, the counterexample to question \ref{relativea} is presented. 

\section{Definable types in $ACVF$}

The background used on valued fields can be found essentially in \cite{ribenboim:64}. For valued fields $(K\subseteq L, v)$ we will denote either by $K/v, L/v$ or $k,l$ their respective residue fields and by $vK, vL$ their valuation groups. All completions of ACVF are determined by the characteristic of the field and the characteristic of the residue field. They are denoted by $ACVF_{x,y}$, where $x$ is the characteristic of the field and $y$ is the characteristic of the residue field. We recall some terminology and notation:

\begin{definition} Let $(K\subseteq L,v)$ be an extension of valued fields
\begin{enumerate}
	\item A point $a\in L$ is \emph{limit }over $K$ if for any ball $B$ of $K$ containing $a$ there is a ball $B'$ in $K$ containing $a$ such that $B'\subsetneq B$.
	\item $K$ is \emph{maximal }if any family of nested non-empty balls has non-empty intersection.
	\item $K$ is \emph{definably maximal} if every definable family of nested non-empty balls has non-empty intersection.
	\item For $x\in L$ and $y_1,\ldots,y_n\in L$,
	$$I_n(x, y_1,\ldots,y_n;K):=\left\{v\left(x+\sum_{i=1}^n e_iy_i\right): e_1,\ldots,e_n\in K\right\}.$$
\end{enumerate}
\end{definition}

\begin{remark}
$K$ is maximal if and only if no extension $L\supseteq K$ contains a limit point over $K$. Clearly maximal implies definably maximal and this later property is first order definable. Every completion of $ACVF$ has a model which is a maximal valued field (for instance the Hahn field $k((G))$ with $G$ a divisible ordered abelian group and $k$ a model of $ACF_0$ is such a model of $ACVF_{0,0}$). Thus all models of $ACVF$ are definably maximal.
\end{remark}

\begin{definition}
Let $\Sigma$ be an $L$-theory, $M\prec N$ two models of $\Sigma$ and $n<\omega$. Subsets of $M^n$ of the form $\phi(M)=\{a\in M^n: N\models\phi(a)\}$ for some $L(N)$-formula $\phi(x)$ are called \emph{externally definable}. For $n\geq 1$, we let $W_n(M,N)$ state that for every $L(N)$-formula $\phi(x)$ with $|x|=n$, the set $\phi(M)$ is $M$-definable. In other terms, every externally definable set with parameters in $N$ and $n$ variables is $M$-definable.
As with $T_n(M)$, we use $W_n(M)$ to denote $W_n(M,N)$ with $N$ an $|M|^+$-saturated elementary extension of $M$ or, equivalently, to say that $M$ satisfies $W_n(M,N)$ for every elementary extensions $N$.
\end{definition}

It is a routine exercise to show that 

$$\displaystyle\bigwedge_{n\geq 1} T_n(M,N) \Leftrightarrow \bigwedge_{n\geq 1} W_n(M,N).$$

\begin{proposition}\label{S1} Let $(K\prec L,v)$ be models of $ACVF$. Then the following are equivalent:
\begin{enumerate}
	\item $W_1(K,L)$;
	\item for any ball $B$ in $L$, $B\cap K$ is $K$-definable;
	\item $T_1(vK,vL)$.
\end{enumerate}
\end{proposition}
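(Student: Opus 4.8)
The plan is to prove the cycle of implications $(1)\Rightarrow(2)\Rightarrow(3)\Rightarrow(1)$. The implication $(1)\Rightarrow(2)$ is essentially immediate: a ball $B$ in $L$ is cut out by a quantifier-free $\{C\}$-formula with parameters in $L$ (either $\{x : v(x-a)\ge \gamma\}$ or $\{x : v(x-a)>\gamma\}$ for $a\in L$ and $\gamma\in vL$, where the condition on $\gamma$ can be rephrased using a further parameter $b\in L$ with $v(b)=\gamma$), so $B\cap K=\phi(K)$ is an externally definable subset of $K$ in one variable; by $W_1(K,L)$ it is $K$-definable. (One should note that an arbitrary ball centered at a point of $L$ of a radius not attained in $vL$ still has the form $\phi(K)$ for an $L$-formula once we also allow $a\in L\setminus K$; the point is only that $B\cap K$ is externally definable.)

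For $(2)\Rightarrow(3)$: by Theorem \ref{general1}(2) applied to the o-minimal (indeed, divisible ordered abelian group) theory of $vK$, it suffices to show that for every $\gamma\in vL$ the cut that $\gamma$ realizes over $vK$ is definable, i.e. $\{\delta\in vK : \delta<\gamma\}$ is $vK$-definable. Fix $b\in L$ with $v(b)=\gamma$. The closed ball $B=\{x\in L: v(x-b)\ge 0\}$... more to the point, consider, for each $a\in K^\times$, whether $v(b)>v(a)$, i.e. $b$ lies in the open ball of radius $v(a)$ about $0$; equivalently $b\in B_a$ where $B_a=\{x : v(x)>v(a)\}$. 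The key observation is that $v(b)$, as an element of $vL$, is entirely determined over $vK$ by which balls of $K$ centered at $0$ the element $b$ belongs to, and by hypothesis (2) each $B_a\cap K$ is $K$-definable; one then has to transfer this definability from $K$ to $vK$. Concretely, the cut of $\gamma=v(b)$ over $vK$ is $\{v(a): a\in K^\times,\ b\notin B_a\}\cup(\text{lower part})$, and using that the ball $B=\{x\in L : v(x)\ge\gamma\}$ meets $K$ in a $K$-definable set one extracts a $vK$-definable description of $\{\delta\in vK:\delta\le\gamma\}$ via the valuation map (which is definable and surjective onto $vK$). I expect \textbf{this transfer step — moving from $K$-definability of $B\cap K$ to $vK$-definability of the corresponding cut in $vK$ — to be the main obstacle}, since one must be careful that the $K$-formula defining $B\cap K$ does not secretly use the field structure in a way that cannot be pushed down to the value group; the resolution should use quantifier elimination in ACVF to put the defining formula into a normal form involving only valuations of polynomials, together with the fact that $\gamma$ lies in the divisible hull generated cut structure, reducing everything to conditions on $v(a)$ for $a\in K$.

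For $(3)\Rightarrow(1)$: assume $T_1(vK,vL)$ and let $\phi(x)$ be an $L(L)$-formula with $|x|=1$; we must show $\phi(K)$ is $K$-definable. By $C$-minimality (quantifier elimination in ACVF), $\phi(L)$ is a finite Boolean combination of balls in $L$, so it suffices to show that for a single ball $B\subseteq L$ the trace $B\cap K$ is $K$-definable. If $B$ has a center in $K$, then $B\cap K=\{x\in K : v(x-a)\ \square\ \gamma\}$ for some $a\in K$, $\gamma\in vL$ and $\square\in\{>,\ge\}$; since the cut of $\gamma$ over $vK$ is definable by $T_1(vK,vL)$, and $x\mapsto v(x-a)$ is a $K$-definable map $K\to vK$, the preimage of that definable cut is $K$-definable. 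If $B$ has no center in $K$, let $a\in B\cap L$ be any point; if $B\cap K=\emptyset$ we are done, and otherwise $B$ is still a ball whose intersection with $K$ equals an intersection (or, after checking, a single ball) of balls centered at points of $K$ — more carefully, picking $c\in B\cap K$ one sees $B\cap K$ is again of the form $\{x\in K: v(x-c)\ \square\ \gamma\}$ — so the same argument applies. Assembling the finitely many balls gives that $\phi(K)$ is $K$-definable, i.e. $W_1(K,L)$. I would present $(3)\Rightarrow(1)$ and $(1)\Rightarrow(2)$ as short routine arguments and devote the bulk of the write-up to the transfer argument in $(2)\Rightarrow(3)$.
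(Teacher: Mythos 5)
Your proposal is correct and uses essentially the same ingredients as the paper's own proof: the trace of an $L$-ball on $K$ is externally definable, a cut in $vK$ realized by a radius $\gamma\in vL$ is the only obstruction (handled by o-minimality/stable embeddedness of the value group, which is exactly the ``transfer step'' you flag), and $C$-minimality reduces $W_1$ to traces of single balls. You merely traverse the cycle in the opposite order ($1\Rightarrow 2\Rightarrow 3\Rightarrow 1$ instead of the paper's $1\Rightarrow 3\Rightarrow 2\Rightarrow 1$), which does not change the substance of any of the three arguments.
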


\begin{proof} Recall that a 1-type over a model of an o-minimal theory is not definable if and only if the type defines a cut\footnote{By definition, an element $\gamma\in vL$ realizes a cut over $vK$ if there are subsets $U,V\subsetneq vK$ such that $U\cap V=\emptyset$, $U\cup V=vK$ and $\forall x\in U\forall y\in V(x<\gamma<y)$.}. We show $1\Rightarrow 3$ by contraposition, so let $\gamma\in vL$ realize a cut over $vK$. Then it is easy to see that the set $\{x\in K:v(x)>\gamma \}$ is not a Boolean combination of $K$-definable balls, \textit{i.e.}., it is not definable in $K$. For $3\Rightarrow 2$, given a ball $B$ in $L$, either its radius (\emph{i.e.} valuational radius) $\gamma$ is $\gamma\in vK$, or $\gamma$ realizes a type of the form $\beta^{\pm}$ with $\beta\in vK$, or $\gamma<vK$, or $vK<\gamma$. We can assume that the ball $B$ is centered at $a\in K$, for if not, then $B\cap K=\emptyset$ which is $K$-definable. Notice this deals with the case $vK<\gamma$. If $\gamma\in vK$, there is nothing to show. Suppose that $\gamma<vK$, then $B\cap K=K$, which is again $K$-definable. Suppose that $\gamma$ realizes a type of the form $\beta^+$ with $\beta\in vK$. Then the $K$-definable open ball $B'$ of radius $\beta$ centered at $a$ satisfies that $B'=B\cap K$. Suppose that $\gamma$ realizes a type of the form $\beta^-$ with $\beta\in vK$. Then the $K$-definable closed ball $B'$ of radius $\beta$ centered at $a$ satisfies that $B'=B\cap K$. Finally, ($2\Rightarrow 1)$ holds by $C$-minimality. \end{proof}

Let $(K,v)$ be a model of $ACVF$. The space of 1-types over $K$ has three kinds of non-realized types: residual types, valuational types and limit types. Let $p$ be a non-realized 1-type over $K$ and let $a$ be a realization of $p$ in some elementary extension $(L, v)$ of $(K,v)$. The type $p$ is \emph{residual} if $K/v\subsetneq K(a)/v$, or equivalently for some $k_1,k_2\in K$, $v(k_1a+k_2)=0$ and $(k_1a+k_2)/v \notin K/v$ (in this case we have that $vK(a)=vK$). The type $p$ is valuational, if $vK\subsetneq vK(a)$, or equivalently if for some $k\in K$, $v(a-h)\notin vK$ (in this case $K(a)/v=K/v$). Finally, $p$ is a \emph{limit type} if $K(a)$ is an immediate extension of $K$, or equivalently if $a$ is limit over $K$. Notice that a residual type is always definable and valuational types are not definable exactly when they determine a cut in $vK$. 

\begin{corollary}\label{T1W1} Let $(K\prec L,v)$ be models of $ACVF$. Then $T_1(M,N)$ implies $W_1(M,N)$.
\end{corollary}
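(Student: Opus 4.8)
The plan is to reduce the statement to results already in hand. By Proposition \ref{S1}, $W_1(K,L)$ is equivalent to $T_1(vK,vL)$, so it suffices to prove that $T_1(K,L)$ implies $T_1(vK,vL)$: definability of all $1$-types over $K$ realized in $L$ should force definability of all $1$-types over $vK$ realized in $vL$. I would prove this by contraposition.

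Suppose $T_1(vK,vL)$ fails. The reduct of a model of $ACVF$ to its value-group sort is a divisible ordered abelian group, hence o-minimal, so by the characterization recalled at the beginning of the proof of Proposition \ref{S1} a non-definable $1$-type over $vK$ realized in $vL$ is exactly one determining a cut; thus there is $\gamma\in vL$ realizing a (proper) cut over $vK$, and in particular $\gamma\notin vK$. Now choose $a\in L$ with $v(a)=\gamma$, which is possible since the valuation of $L$ is surjective onto $vL$. Then $vK\subsetneq vK(a)$, so $tp(a/K)$ is a valuational type in the terminology fixed above; and a valuational type is non-definable precisely when the associated value realizes a cut in $vK$, which is the case here. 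Hence $tp(a/K)$ is a non-definable $1$-type over $K$ realized in $L$, so $T_1(K,L)$ fails, as desired.

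I do not expect any real obstacle: every ingredient — surjectivity of the valuation onto the value group, the identification of $tp(a/K)$ with $v(a)\notin vK$ as a valuational type, the cut criterion for (non-)definability of valuational types, and the o-minimality/cut characterization on the value-group side — has already been recorded in the discussion preceding the statement, so the proof is purely a matter of assembling these facts. (Alternatively, one could phrase the reduction as the general observation that definability of types transfers to an interpretable reduct such as the value group, but the direct argument above is more in keeping with the present setting.)
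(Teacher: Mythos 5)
Your proposal is correct and follows exactly the paper's route: the paper's proof is simply the observation that $T_1(K,L)$ implies $T_1(vK,vL)$ (which you spell out via the cut/valuational-type argument) followed by an appeal to Proposition \ref{S1}. No issues.
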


\begin{proof} It is easy to see that $T_1(K,L)$ implies $T_1(vK,vL)$. Thus $W_1(K,L)$ follows by \ref{S1}. \end{proof}

\begin{proposition}\label{limitnotdef} Let $p$ be a limit type over $K$. Then $p$ is not definable. 
\end{proposition}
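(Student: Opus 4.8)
The plan is to show that a limit type $p$ over $K$ cannot be definable by exhibiting, inside a realization, a definable family of nested balls whose ``trace'' on $K$ would have to be both nonempty (by definable maximality of $K$) and empty (because the realization is a limit point), a contradiction. Let $a$ realize $p$ in some elementary extension $(L,v)$ of $(K,v)$; since $p$ is a limit type, $K(a)$ is an immediate extension of $K$ and $a$ is limit over $K$, so for every ball $B$ of $K$ with $a\in B$ there is a strictly smaller ball $B'$ of $K$ with $a\in B'\subsetneq B$. Equivalently, the set of values $\{v(a-k): k\in K\}$ has no maximum in $vK$, and for each $k\in K$ the ball $B_k := \{x : v(x-k) \ge v(a-k)\}$ (or its open version) is a ball of $K$ containing $a$, and these balls are linearly ordered by inclusion and cofinally nested.

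First I would argue by contraposition: suppose $p$ is definable. Consider the $L$-formula $\phi(x,y)$ expressing ``$v(x-y) \ge v(a-y)$'', i.e. essentially $C$-type information comparing $x-y$ with $a-y$; more precisely one works with a formula of the form $\neg C(a, x, y)$ or an equivalent, so that $\phi(a', k)$ holds for $a'\models p$ iff $k$ and $a'$ lie in a common ball of a controlled radius. Definability of $p$ yields an $L(K)$-formula $\delta_\phi(y)$ such that $\phi(x,k)\in p \iff K\models \delta_\phi(k)$. I would then use $\delta_\phi$ to \emph{define over $K$} the family of balls $\{B_k : k \in K,\ K\models\delta_\phi(k)\}$: this is a $K$-definable family of nonempty balls, it is nested (linearly ordered by inclusion, because the $B_k$ all contain the single point $a$ in $L$, and any two balls sharing a point are comparable), and it is cofinal downward among balls of $K$ containing $a$ (because $a$ is a limit point, so no ball of $K$ around $a$ is minimal). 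Since $K$ is definably maximal (every model of $ACVF$ is, by the Remark), this definable nested family has nonempty intersection: there is $c \in K$ lying in every $B_k$. But then $c$ and $a$ lie in a common ball of $K$ of every radius occurring as $v(a-k)$, which forces $v(a-c) > v(a-k)$ for all $k\in K$ — in particular $v(a-c) > v(a-c)$ if we are allowed to plug in $k=c$, or more carefully $v(a-c)$ is strictly above the cofinal set $\{v(a-k):k\in K\}$, contradicting that this set is cofinal in $vK$ and hence (since the extension is immediate) contradicting $v(a-c)\in vK(a)=vK$.

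The main obstacle I anticipate is getting the formula $\phi$ and the induced $K$-definable family of balls exactly right, so that (i) $\delta_\phi$ really does pick out a \emph{nested} family — one must be careful that $\phi(x,k)\in p$ translates to ``$a\in B_k$'' with the radii of the $B_k$ ranging cofinally, which may require packaging the radius as part of the data (e.g. working with the two-variable family ``ball around $y_1$ of radius $v(y_1-y_2)$'' and using $T_1$-type control, or simply noting that the cut described by $\{v(a-k):k\in K\}$ is definable from $\delta_\phi$); and (ii) the emptiness/nonemptiness clash is genuinely a contradiction, i.e.\ that a common point $c\in K$ of all the $B_k$ violates the limit-point property. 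Both points are really the observation that a limit point over $K$ is precisely a point whose position is pinned down by an \emph{intersection of $K$-balls that is empty in $K$}; definability of $p$ would make that intersection $K$-definable, and definable maximality then refills it, contradicting immediacy. A clean way to finish is: definability of $p$ plus the limit-type hypothesis would exhibit an empty definable nested intersection of balls in $K$, contradicting definable maximality; hence $p$ is not definable.
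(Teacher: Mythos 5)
Your proposal is correct and follows essentially the same route as the paper: definability of $p$ turns the family of $K$-balls around the realization into a $K$-definable nested family, and definable maximality of models of $ACVF$ then yields the contradiction (the paper indexes the balls by their radius via $D_z=\{y\in K: v(a-y)\geq v(z)\}$ and concludes the intersection is empty, while you index by centers and derive the contradiction from a common point $c$). One small repair at the end: the ultrametric inequality gives $v(a-c)\geq v(a-k)$ for all $k$, not $>$, so the contradiction is that $v(a-c)$ would be a \emph{maximum} of $\{v(a-k):k\in K\}$, which is impossible for a limit point (this set need not be cofinal in $vK$).
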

\begin{proof} Suppose that $a\in L$ is a limit point over $K$. Consider $D:=\{(y,z)\in K^2:v(a-y)\geq v(z)\}$. If $tp(a/K)$ was definable, the family $D_z:=\{y\in K: (y,z)\in D\}$ indexed by all $z\in K$ such that $D_z$ is not empty, would be a definable family of nested balls with empty intersection, contradicting that all algebraically closed valued fields are definably maximal. \end{proof}

\begin{corollary}\label{T1}
$T_1(K,L)$ if and only if $T_1(vK,vL)$ and $L$ does not contain any limit point over $K$. 	
\end{corollary}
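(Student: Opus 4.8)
The plan is to prove both implications by unwinding the trichotomy of non-realized $1$-types over a model of $ACVF$ into residual, valuational, and limit types recorded in the paragraph preceding Corollary~\ref{T1W1}.

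For the forward direction I would argue by contraposition on the two conditions on the right. Assume $T_1(K,L)$. First, every $\gamma\in vL$ is of the form $v(a)$ for some $a\in L$, and a definition of $tp(a/K)$ induces one of $tp(\gamma/vK)$ (this is the observation already used in the proof of Corollary~\ref{T1W1}), so $T_1(vK,vL)$ holds. Second, if $L$ contained a limit point $a$ over $K$, then $tp(a/K)$ would be a limit type, hence not definable by Proposition~\ref{limitnotdef}, contradicting $T_1(K,L)$; so $L$ contains no limit point over $K$.

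For the converse, assume $T_1(vK,vL)$ and that $L$ contains no limit point over $K$, and fix $a\in L$; I would show $tp(a/K)$ is definable. If $a\in K$ the type is realized, hence definable. Otherwise $tp(a/K)$ is residual, valuational, or a limit type. The limit case is excluded by hypothesis. If the type is residual it is definable, as recalled before Corollary~\ref{T1W1}. If it is valuational, pick $k\in K$ with $\gamma:=v(a-k)\notin vK$; a valuational type fails to be definable only when it determines a cut in $vK$, which would force $\gamma$ to determine a cut over $vK$, i.e.\ $tp(\gamma/vK)$ non-definable, contradicting $T_1(vK,vL)$. Hence $tp(a/K)$ is definable in every case, so $T_1(K,L)$ holds.

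The argument is essentially bookkeeping once the classification of $1$-types over a model of $ACVF$ is in hand; the only point requiring a little care is the valuational case of the converse, where one needs precisely that non-definability of a valuational type is equivalent to its determining a cut in $vK$ — exactly the statement recorded just before Corollary~\ref{T1W1}, and the reason $T_1(vK,vL)$ is the right hypothesis. I do not expect any serious obstacle.
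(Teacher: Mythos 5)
Your proof is correct and follows essentially the same route as the paper: the forward direction combines the observation that $T_1(K,L)$ implies $T_1(vK,vL)$ with Proposition~\ref{limitnotdef}, and the converse uses the trichotomy of non-realized $1$-types together with the fact that residual types are always definable and valuational types are non-definable exactly when they determine a cut in $vK$. Your write-up simply spells out the valuational case in a bit more detail than the paper does.
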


\begin{proof} Suppose $T_1(K,L)$. We clearly have $T_1(vK,vL)$ and by Proposition (\ref{limitnotdef}) $L$ does not contains a limit point over $K$. For the right-to-left implication, given that $L$ does not contain limit points, all 1-types realized in $L$ are either residual or valuational. Moreover, since $T_1(vK,vL)$ holds, all valuational types realized in $L$ are definable. Therefore, $T_1(K,L)$ holds. \end{proof}

\begin{corollary}
	$W_1(K,L)$ does not imply $T_1(K,L)$ or $W_2(K,L)$.
\end{corollary}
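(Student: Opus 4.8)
The plan is to exhibit a single pair $(K\prec L,v)$ for which $W_1(K,L)$ holds but both $T_1(K,L)$ and $W_2(K,L)$ fail; by Proposition~\ref{S1} and Corollary~\ref{T1} this reduces to controlling three features of the extension: the value group extension $vK\subseteq vL$ must realize no cut, $L$ must contain a limit point over $K$ (to kill $T_1$), and $L$ must contain an externally definable subset of $K^2$ that is not $K$-definable (to kill $W_2$). The natural candidate is to take $K$ a model of $ACVF$ that is \emph{not} maximal — e.g.\ a model whose value group has cofinality $\omega$, so that there is a strictly nested sequence of balls $B_0\supsetneq B_1\supsetneq\cdots$ in $K$ with radii cofinal in $vK$ and empty intersection in $K$ — and then let $L$ be an elementary extension containing a point $a$ lying in all the $B_i$. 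Such an $a$ is a limit point over $K$, so $T_1(K,L)$ fails by Proposition~\ref{limitnotdef}.

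Next I would arrange that the value group extension is tame. Concretely, choose $a$ so that $K(a)$ is an \emph{immediate} extension of $K$ (the limit point $a$ witnesses a pseudo-Cauchy sequence of transcendental type, giving $vK(a)=vK$ and residue field unchanged); then $vK=vL$ if we take $L$ generated appropriately, or in any case one can take $L$ with $vL=vK$, so $T_1(vK,vL)$ holds trivially and hence $W_1(K,L)$ holds by Proposition~\ref{S1}. This already gives the first two assertions: $W_1(K,L)$ holds while $T_1(K,L)$ fails. The remaining, and I expect hardest, point is $\neg W_2(K,L)$: I need an $L$-definable subset of $K^2$ that is not $K$-definable. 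The set used in the proof of Proposition~\ref{limitnotdef}, namely $D:=\{(y,z)\in K^2: v(a-y)\geq v(z)\}$, is exactly of this kind. One must check that $D$ is genuinely not $K$-definable: if it were, its fibres $D_z$ over a cofinal set of $z\in K$ would form a $K$-definable family of nonempty nested balls (the balls of radius $v(z)$ around $a$, intersected with $K$ — each nonempty because $a$ is limit, so these are honest balls of $K$) whose intersection $\bigcap_z D_z$ would have to be nonempty by definable maximality of $K$, yet it is empty since $a\notin K$. That contradiction shows $D$ witnesses $\neg W_2(K,L)$.

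So the structure of the argument is: (i) recall that non-maximal models of every completion of $ACVF$ exist (e.g.\ take inside a Hahn field $k((\mathbb{Q}))$ the subfield generated over $k(t^q:q\in\mathbb{Q})$ so that the value group has countable cofinality, or simply invoke that maximal-but-not-spherically-complete phenomena occur), and pick a pseudo-limit $a$ of transcendental type over such a $K$ in a suitable $L$; (ii) verify $vL=vK$ (or at least that no cut is realized) so that $W_1(K,L)$ holds via Proposition~\ref{S1}; (iii) invoke Proposition~\ref{limitnotdef} to get $\neg T_1(K,L)$; (iv) run the definable-maximality argument on $D$ above to get $\neg W_2(K,L)$. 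The main obstacle is purely step (i)–(ii): making sure one can simultaneously have a limit point \emph{and} no new cut in the value group — i.e.\ that the limit point can be taken to give an immediate (or at least value-group-preserving) extension. This is classical valuation theory (existence of immediate extensions by pseudo-Cauchy sequences without a limit, after Kaplansky), so no real difficulty arises; everything else is a direct application of the results already proved in this section.
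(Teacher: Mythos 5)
Your proposal is correct and follows essentially the same route as the paper: take $K$ non-maximal and $L$ a proper immediate (hence elementary) extension, so that $vK=vL$ gives $W_1(K,L)$ via Proposition~\ref{S1}, while any $a\in L\setminus K$ is a limit point, killing $T_1(K,L)$ by Proposition~\ref{limitnotdef} and killing $W_2(K,L)$ via the non-definability of $D=\{(y,z)\in K^2: v(a-y)\geq v(z)\}$ by the same definable-maximality argument. The only difference is presentational: the paper gets the value-group condition for free by taking the whole of $L$ immediate over $K$, whereas you build it around a single pseudo-limit of transcendental type, which amounts to the same construction.
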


\begin{proof} Take $K$ not maximal and $L$ a proper immediate extension. Therefore, since $vK=vL$ we have that $T_1(vK,vL)$ and by \ref{S1}, $W_1(K,L)$. However, for any $a\in L\setminus K$, $a$ is limit over $K$ and hence both $tp(a/K)$ and the set $D:=\{(y,z)\in K^2:v(a-y)\geq v(z)\}$ are not definable. \end{proof}

We present some known results on divisible ordered abelian groups that will be later used (see chapter 2 of \cite{lewenberg:95}). Let $DOAG$ be the theory of non-trivial divisible ordered abelian groups. This theory is complete and model-complete. If $G$ and $H$ are ordered groups, we use $G\trianglelefteq H$ to express that $G$ is a \emph{convex subgroup} of $H$. 

\begin{proposition}\label{doag} Let $G\prec H$ be models of $DOAG$. The following are equivalent:
\begin{enumerate}
	\item $T_1(G,H)$;
	\item  $T_n(G,H)$ for all $n\geq 1$;
	\item there are convex subgroups $C_1\trianglelefteq C_2 \trianglelefteq H$ such that $C_2=G\times C_1$ with the lexicographic order;
	\item for all $x\in H$, if there is $y\in G$ such that $|x|<|y|$ then there is some $z\in G$ such that $|x-z|<G^{>0}$.
\end{enumerate}
\end{proposition}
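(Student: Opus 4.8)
The plan is to prove the equivalences by the route $(2)\Rightarrow(1)$ trivially, $(1)\Rightarrow(2)$ via Marker--Steinhorn, and then $(1)\Leftrightarrow(4)\Leftrightarrow(3)$ by hand. Indeed $(2)\Rightarrow(1)$ is immediate, and $(1)\Rightarrow(2)$ follows from Theorem~\ref{general1}(2): $DOAG$ eliminates quantifiers in the language of ordered groups, hence is o-minimal, so over models $G\prec H$ the condition $T_1(G,H)$ already forces $T_n(G,H)$ for all $n\geq 1$. It thus suffices to establish $(1)\Leftrightarrow(4)$ and $(4)\Leftrightarrow(3)$.

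For $(1)\Leftrightarrow(4)$ I would use the standard description of definable $1$-types over a model of an o-minimal theory recalled in the proof of Proposition~\ref{S1}: a non-realised $1$-type $p$ over $G$ is definable if and only if $p$ is not a (non-principal) cut, i.e.\ $p$ is one of $a^{+}$, $a^{-}$ with $a\in G$, or the type at $+\infty$ or at $-\infty$. Assuming $(1)$, let $x\in H$ with $|x|<|y|$ for some $y\in G$; if $x\in G$ take $z=x$, and otherwise $\mathrm{tp}(x/G)$ is definable and non-realised, and since $x$ is bounded over $G$ it is not a $\pm\infty$ type, so $\mathrm{tp}(x/G)=a^{\pm}$ for some $a\in G$, which is exactly to say $|x-a|<G^{>0}$; take $z=a$. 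Conversely, assuming $(4)$, let $x\in H$: if $|x|<|y|$ for some $y\in G$ then $(4)$ gives $z\in G$ with $|x-z|<G^{>0}$, so $\mathrm{tp}(x/G)$ is $z$ or $z^{\pm}$, hence definable; and if no such $y$ exists then $|x|>G^{>0}$, so $x>G$ or $x<G$ and $\mathrm{tp}(x/G)$ is the type at $\pm\infty$, again definable. Hence $T_1(G,H)$.

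For $(4)\Leftrightarrow(3)$, set $C_1:=\{x\in H:|x|<G^{>0}\}$ and $C_2:=\{x\in H:\exists\,y\in G,\ |x|\leq|y|\}$, the convex hull of $G$ in $H$. Using divisibility of $G$ one checks that $C_1$ and $C_2$ are convex subgroups of $H$ with $C_1\trianglelefteq C_2\trianglelefteq H$, and clearly $G\cap C_1=\{0\}$. Now $(4)$ says precisely that $C_2=G+C_1$, hence $C_2=G\oplus C_1$ as abelian groups; moreover the order on $C_2$ is the lexicographic one on $G\times C_1$, because every element of $C_1$ is infinitesimal with respect to $G^{>0}$. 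This gives $(4)\Rightarrow(3)$. Conversely, given convex subgroups $C_1\trianglelefteq C_2\trianglelefteq H$ with $C_2=G\times C_1$ lexicographically (identifying $G$ with $G\times\{0\}$ and $C_1$ with $\{0\}\times C_1$), take $x\in H$ with $|x|<|y|$ for some $y\in G$: convexity of $C_2$ gives $x\in C_2$, and writing $x=g+c$ with $g\in G$ and $c\in C_1$, the lexicographic order forces $|c|<G^{>0}$, so $z=g$ witnesses $(4)$.

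I expect no serious difficulty here: the argument is essentially structural bookkeeping. The two points that require care are (a) invoking the o-minimal dichotomy for definable $1$-types in exactly the form ``definable $\Leftrightarrow$ not a non-principal cut,'' and in particular that a \emph{bounded} non-realised definable type must be of the shape $a^{\pm}$; and (b) making sure that condition $(3)$ is read so that the given embedding $G\hookrightarrow H$ coincides with the factor $G\times\{0\}$ of $C_2$, so that $(3)$ and $(4)$ really refer to the same convex subgroups, together with the verification that the decomposition $C_2=G\oplus C_1$ is order-compatible.
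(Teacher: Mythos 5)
Your proof is correct and follows essentially the same route as the paper: Marker--Steinhorn for $(1)\Leftrightarrow(2)$, the ``definable iff not a cut'' criterion for $1$-types, and the same two subgroups $C_1$ (the elements infinitesimal with respect to $G^{>0}$) and $C_2$ (the convex hull of $G$). The only difference is that you close the cycle as $(1)\Leftrightarrow(4)\Leftrightarrow(3)$ rather than $(1)\Rightarrow(3)\Rightarrow(4)\Rightarrow(1)$, so you get the splitting $C_2=G\oplus C_1$ directly from $(4)$ instead of via the paper's quotient argument on $C_2/C_1$ --- a harmless, mildly cleaner reorganization.
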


\begin{proof} Given that DOAG is o-minimal, the equivalence between (1) and (2) follows by the Marker-Steinhorn theorem. For (1) implies (3), let $C_2$ be the convex hull of $G$ in $H$ and $C_1$ the biggest convex subgroup of $H$ such that all positive elements are below $G^{>0}$ (where  $G^{>0}:=\{x\in G: x>0\}$). By construction, $G$ embeds in $C_2/C_1$ and $C_2/C_1$ does not realize over $G$ any type of the form $\pm\infty$ nor $0^\pm$, therefore no type of the form $a^\pm$ for $a\in G$. Therefore, if $C_2/C_1\neq G$, $C_2/C_1$ must realize a cut over $G$ which implies that $C_2$ and hence $H$ realizes a cut over $G$. Thus $G=C_2/C_1$, hence $C_2=G\times C_1$ as groups. Now $C_1\trianglelefteq C_2$ implies that $C_2=G\times C_1$ as ordered groups. To show (3) implies (4), let $x\in H$ be such that $|x|<|y|$ for some $y\in G$. Then $x\in C_2$, so $c=g+e$ with $g\in G$ and $e\in C_1$. Finally, to show (4) implies (1), suppose that $a\in H$ lies in a cut over $G$. Then there is $y\in G$ such that $|a|<|y|$, but there is no $g\in G$ such that $|a-g|<G^{>0}$. \end{proof}

\begin{definition} Let $(K\subseteq L,v)$ be an extension of valued fields.
\begin{enumerate}
	\item A sequence $y_1,\ldots,y_n\in L$ is \emph{separated} if for all $k_1,\ldots,k_n\in K$ it satisfies
	$$v\left(\sum_{i=1}^n k_i y_i\right)=\min\{v(k_iy_i);1\leq i\leq n\}.$$
	\item The extension $(K\subseteq L,v)$ is \emph{separated} when every finitely generated $K$-vector subspace of $L$ has a separated basis, \textit{i.e.}., for all $x_1,\ldots,x_n\in L$ there are $y_1,\ldots,y_m\in L$ such that $\sum x_iK=\sum y_iK$ and $y_1,\ldots,y_m$ is a separated sequence. \end{enumerate}
	
\end{definition}

This definition was introduced by Baur in \cite{baur:82}. The following results about separated extensions will be later used (their proofs can be found in \cite{baur:82} and \cite{delon:88} respectively):

\begin{theorem}[Baur]\label{baur} If a valued field $K$ is maximal, then any valued field extension of $K$ is separated.
\end{theorem}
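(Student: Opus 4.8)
The plan is to prove Baur's theorem by induction on the dimension $n$ of a finitely generated $K$-subspace $V\subseteq L$, establishing that each such $V$ has a separated basis. The case $n\le 1$ is immediate, since any single nonzero vector forms a separated sequence. For the inductive step I would fix a basis $x_1,\dots,x_{n+1}$ of $V$, set $W=\sum_{i=1}^n Kx_i$, apply the induction hypothesis to obtain a separated basis $y_1,\dots,y_n$ of $W$, and put $x=x_{n+1}$, so that $V=W\oplus Kx$. The idea is to correct $x$ by a suitable element of $W$: I will show that the set $D=\{v(x-w):w\in W\}$ (which lies in $vL$, since $x\notin W$) attains a maximum at some $w_0\in W$, and then take $y_{n+1}:=x-w_0$.

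The heart of the matter, and the only place maximality of $K$ is used, is the claim that $D$ has a maximum. Because $y_1,\dots,y_n$ is a separated basis, the coordinate map $K^n\to W$ is an isometry once $K^n$ is equipped with the valuation $(k_i)\mapsto\min_i v(k_iy_i)$; and maximality of $K$ implies that $K^n$ with this valuation is spherically complete, since a ball of $K^n$ is exactly a product of balls of $K$, so that a nested family of nonempty balls of $K^n$ projects coordinatewise to nested families of nonempty balls of $K$, each with nonempty intersection by maximality. Now the sets $B_\delta=\{w\in W:v(x-w)\ge\delta\}$, for $\delta\in D$, form a chain of nonempty balls of $W$; their intersection contains some $w_0$, and $v(x-w_0)\in D$ then dominates every element of $D$, hence is the maximum. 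I expect this step — in particular the reduction of spherical completeness of $(K^n,\min)$ to that of $K$ through the product structure of balls — to be the main point requiring care.

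Finally, with $y_{n+1}=x-w_0\neq 0$ (as $x\notin W$) and $\sum_{i=1}^{n+1}Ky_i=W\oplus K(x-w_0)=V$, it remains to check that $y_1,\dots,y_{n+1}$ is separated. Given $k_1,\dots,k_{n+1}\in K$ not all zero, set $s=\sum_{i=1}^{n+1}k_iy_i$. If $k_{n+1}=0$ this is the induction hypothesis. Otherwise write $s=k_{n+1}(y_{n+1}+w')$ with $w'=\sum_{i\le n}(k_i/k_{n+1})y_i\in W$; since $y_{n+1}+w'=x-(w_0-w')$ and $w_0-w'\in W$, maximality of $w_0$ gives $v(y_{n+1}+w')\le v(x-w_0)=v(y_{n+1})$, while the induction hypothesis gives $v(w')=\min_{i\le n}v((k_i/k_{n+1})y_i)$. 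A short case analysis with the ultrametric inequality then finishes: if $v(w')<v(y_{n+1})$ one gets $v(s)=\min_{i\le n}v(k_iy_i)<v(k_{n+1}y_{n+1})$, and if $v(w')\ge v(y_{n+1})$ one gets $v(s)=v(k_{n+1}y_{n+1})\le\min_{i\le n}v(k_iy_i)$; either way $v(s)=\min_{1\le i\le n+1}v(k_iy_i)$, as required.
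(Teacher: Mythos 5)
The paper does not prove this statement; it cites \cite{baur:82} for it, so there is no in-text proof to compare against. Your induction is, in substance, Baur's original argument and it is correct: the maximum of your set $D=\{v(x-w):w\in W\}$ is precisely the maximal element of $I_n(x,y_1,\ldots,y_n;K)$ whose existence is condition (3) of Theorem \ref{delon2}, and your verification that $y_{n+1}=x-w_0$ extends the separated basis goes through in both cases of the ultrametric analysis (the degenerate case $w'=0$ folds into the second case). The one step that needs more care is exactly the one you flag: under the coordinate isometry the ball $B_\delta$ becomes $\prod_i\{k\in K: v(k-a_i)\ge \delta-v(y_i)\}$, and the radii $\delta-v(y_i)$ lie in $vL$, not necessarily in $vK$, so these factors need not be balls of $K$ in the sense in which maximality of $K$ is stated. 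This is harmless but should be said: since $v(k-a_i)\in vK\cup\{\infty\}$, each factor equals the intersection of the chain of genuine non-empty $K$-balls $\{k\in K: v(k-a_i)>\sigma\}$ for $\sigma\in vK$, $\sigma<\delta-v(y_i)$; and because any two balls of an ultrametric space with a common point are nested, the union of all these chains over $\delta\in D$ is again a chain of non-empty $K$-balls, to which maximality applies and whose intersection is $\bigcap_{\delta\in D}B_\delta$. As written, the sentence ``a ball of $K^n$ is exactly a product of balls of $K$'' elides this point and should be expanded; with that addition the proof is complete.
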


\begin{theorem}[Delon]\label{delon2} Let $(K\subseteq L,v)$ be an extension of algebraically closed valued fields.\footnote{Here $(K,v)$ algebraically maximal and Kaplansky conditions are enough.} Then the following are equivalent:
\begin{enumerate}
	\item the extension $(K\subseteq L,v)$ is separated; 
	\item $L$ is linearly disjoint over $K$ with any immediate extension of $K$ (in any common valued extension);
	\item for all $n$ and all $x,y_1,\ldots,y_n \in L$, $I_n(x,y_1,\ldots,y_n;K)$ has a maximal element.
\end{enumerate}
\end{theorem}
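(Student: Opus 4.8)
Write $W=\sum_{i=1}^{n}Ky_{i}$. Since $\{x+\sum_{i}e_{i}y_{i}:e_{i}\in K\}=x+W$, asking that $I_{n}(x,y_{1},\dots ,y_{n};K)$ have a maximal element is the same as asking that the distance $d(x,W):=\sup\{v(x-w):w\in W\}$ be \emph{attained} (we read $d(x,W)=\infty$, attained, when $x\in W$). Thus (3) says: for every finite-dimensional $K$-subspace $W\subseteq L$ and every $x\in L$, $d(x,W)$ is attained. The plan is to prove the cycle $(1)\Leftrightarrow(3)$, $(1)\Rightarrow(2)$, $(2)\Rightarrow(1)$, where the last implication is the only place the algebraic-maximality and Kaplansky hypotheses on $K$ enter. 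For $(1)\Rightarrow(3)$: put $V=Kx+W$ and assume $x\notin W$; take a separated basis $z_{1},\dots ,z_{m}$ of $V$ (by (1)) and set $\gamma_{j}=v(z_{j})$. Writing $x-w=\sum_{j}f_{j}z_{j}$ gives $v(x-w)=\min_{j}(v(f_{j})+\gamma_{j})$, and as $w$ runs through $W$ the vector $(f_{j})$ runs through a proper affine $K$-subspace of $K^{m}$ not passing through $0$. Eliminating its defining equation (after a harmless rescaling of coordinates), maximizing $v(x-w)$ reduces to: for a fixed $c\in K^{\times}$, maximize $\min_{j}(v(v_{j})+\gamma_{j})$ over $\{(v_{j})\in K^{m}:\sum_{j}v_{j}=c\}$. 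Putting all of $c$ into a coordinate with $\gamma_{j}$ maximal achieves $v(c)+\max_{j}\gamma_{j}$, and any strictly larger value would force $v(v_{j})>v(c)$ for all $j$, hence $v(c)=v(\sum_{j}v_{j})>v(c)$, absurd; so $d(x,W)$ is attained. No hypothesis on $K$ is used.

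For $(3)\Rightarrow(1)$ I induct on $\dim V$, the cases $\dim V\le 1$ being clear. If $\dim V=m\ge 2$, pick a basis $v_{1},\dots ,v_{m}$, set $W=\sum_{i<m}Kv_{i}$, $x=v_{m}$, and use (3) to get $w_{0}\in W$ with $v(x-w_{0})=d(x,W)$; put $y=x-w_{0}$, so $V=W\oplus Ky$. By induction $W$ has a separated basis $z_{1},\dots ,z_{m-1}$, and then $z_{1},\dots ,z_{m-1},y$ is a separated basis of $V$: for $w'\in W$, $v(y+w')=v(x-(w_{0}-w'))\le d(x,W)=v(y)$, which with the ultrametric inequality yields $v(ky+\sum_{j}k_{j}z_{j})=\min(v(ky),\min_{j}v(k_{j}z_{j}))$ for all $k,k_{j}\in K$; again no hypothesis on $K$ is used. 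For $(1)\Rightarrow(2)$: let $K'\supseteq K$ be an immediate extension and $\lambda_{1},\dots ,\lambda_{r}\in L$ be $K$-independent; by (1), $V=\sum_{i}K\lambda_{i}$ has a separated basis $z_{1},\dots ,z_{r}$ over $K$, and since the $z_{j}$ and the $\lambda_{i}$ are related by an invertible $K$-matrix it suffices to see the $z_{j}$ stay $K'$-independent, for which it is enough that they remain a separated sequence over $K'$. If not, after normalizing there are $d_{j}\in K'$ with $v(z_{1}+\sum_{j\ge 2}d_{j}z_{j})>v(z_{1})=\min_{j}v(d_{j}z_{j})$; since $vK'=vK$ and the residue fields agree, for each $j\ge 2$ one finds $e_{j}\in K$ with $v((d_{j}-e_{j})z_{j})>v(z_{1})$ (take $e_{j}=0$ if $v(d_{j}z_{j})>v(z_{1})$; otherwise $v(d_j)\in vK$, so rescale $d_{j}$ by an element of $K^{\times}$ of the same value and use equality of residue fields), whence $v(z_{1}+\sum_{j\ge 2}e_{j}z_{j})>v(z_{1})$ with all $e_{j}\in K$ — contradicting separatedness of $z_{1},\dots ,z_{r}$ over $K$.

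The implication $(2)\Rightarrow(1)$ is the substantial one. Let $\hat{K}$ be a maximal immediate extension of $K$; by the Kaplansky conditions it is unique over $K$ and every immediate extension of $K$ embeds into it over $K$, so (2) says exactly that $L$ and $\hat{K}$ are linearly disjoint over $K$. Then $L\otimes_{K}\hat{K}$ is a domain, and — first technical point — one equips a field $M$ containing $L$ and $\hat{K}$, amalgamated over $K$, with a valuation restricting to the given valuations on both; linear disjointness is what makes this ``free amalgam of valued fields'' available. As $\hat{K}$ is maximal, Baur's theorem \ref{baur} shows $\hat{K}\subseteq M$ is separated, so by $(1)\Leftrightarrow(3)$ every distance in $M$ from a point to a finite-dimensional $\hat{K}$-subspace is attained; also, by the argument of $(1)\Rightarrow(2)$ run over $\hat{K}$, a sequence separated over $K$ stays separated over $\hat{K}$. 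Now assume (1), equivalently (3), fails and choose $n$ minimal with some $I_{n}(x,y_{1},\dots ,y_{n};K)$ without maximal element; as before $x\notin W:=\sum_{i\le n}Ky_{i}$ and $\dim W=n$, and minimality of $n$ together with $(3)\Rightarrow(1)$ in dimension $\le n$ gives $W$ a separated basis $z_{1},\dots ,z_{n}$ over $K$, hence over $\hat{K}$. Take approximants $w^{(\alpha)}=\sum_{j}k_{j}^{(\alpha)}z_{j}\in W$ with $v(x-w^{(\alpha)})$ strictly increasing and with supremum $d(x,W)$; each coordinate sequence $(k_{j}^{(\alpha)})_{\alpha}$ is pseudo-Cauchy in $K$, hence has a pseudo-limit $\kappa_{j}\in\hat{K}$ (maximality of $\hat{K}$), and $\hat{w}:=\sum_{j}\kappa_{j}z_{j}\in M$ then satisfies $v(x-\hat{w})\ge d(x,W)$. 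If $v(x-\hat{w})=\infty$ we get $x=\sum_{j}\kappa_{j}z_{j}$, a nontrivial $\hat{K}$-relation among the $K$-independent elements $x,z_{1},\dots ,z_{n}$ of $L$, contradicting linear disjointness and hence establishing the contrapositive.

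The hard part is the remaining case, where $d(x,W)$ is \emph{finite} and not attained: then $\hat{w}$ only witnesses that $x$ lies at positive distance from $\sum_{j}\hat{K}z_{j}$, and one must still extract an honest $\hat{K}$-linear relation. My plan is to pass to the distance $d_{M}(x,\sum_{j}\hat{K}z_{j})$ — attained in $M$ since $\hat{K}\subseteq M$ is separated — realized at some $\mu=\sum_{j}\rho_{j}z_{j}$, $\rho_{j}\in\hat{K}$; if this distance is infinite we again land on an $\hat{K}$-relation, and if it is finite I would use $v\hat{K}=vK$ and equality of residue fields, together with the uniqueness of the maximal immediate extension, to approximate the $\rho_{j}$ by elements of $K$ closely enough to contradict that $d_{L}(x,W)=d(x,W)$ is finite and not attained. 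The precision bookkeeping in this last step — which is exactly what forces $K$ to be algebraically maximal and to satisfy the Kaplansky conditions, so that the relevant pseudo-Cauchy sequences behave and $\hat{K}$ is genuinely canonical — is the delicate point, and is where I expect the real effort to lie.
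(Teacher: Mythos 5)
First, a point of reference: the paper does not prove this theorem at all --- it is quoted as a known result with the proof attributed to \cite{delon:88} (and \cite{baur:82} for the companion statement), so there is no in-paper argument to compare yours against. On its own terms, your treatment of $(1)\Leftrightarrow(3)$ and $(1)\Rightarrow(2)$ is essentially correct: the reduction of ``$I_n$ has a maximum'' to attainment of $d(x,W)$, the optimization over $\{\sum_j v_j=c\}$, the inductive construction of a separated basis from attained distances, and the residue/value-group approximation showing a $K$-separated sequence stays separated over an immediate extension are all sound. One secondary gap: in the last part you assert that the coordinate sequences $(k_j^{(\alpha)})_\alpha$ are pseudo-Cauchy, but separatedness of $z_1,\dots,z_n$ only gives $v(k_j^{(\alpha)}-k_j^{(\beta)})\geq\gamma_\alpha-v(z_j)$; upgrading this to pseudo-Cauchyness requires passing to a cofinal subsequence and invoking the minimality of $n$ (exactly as the paper does in its Claim \ref{cla:2.4} for the analogous situation in the proof of Theorem \ref{charACVF}). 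That is fixable.

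The genuine gap is in $(2)\Rightarrow(1)$, and you have correctly located it but your proposed repair cannot work. Working inside a \emph{pre-given} amalgam $M$ of $L$ and $\hat K$, the element $\hat w=\sum_j\kappa_j z_j$ will in general satisfy $v(x-\hat w)<\infty$, and no contradiction with linear disjointness arises: linear disjointness only forbids an \emph{exact} relation. Your fallback --- realize $d_M(x,\sum_j\hat K z_j)$ at some $\mu=\sum_j\rho_j z_j$ and approximate the $\rho_j$ by elements of $K$ ``closely enough'' --- is circular: since $\hat K/K$ is immediate, the best $K$-approximants to $\rho_j$ at the relevant scale are (up to value) exactly the $k_j^{(\alpha)}$ themselves; indeed $v(x-\mu)>\gamma_\alpha$ and $v(x-w^{(\alpha)})=\gamma_\alpha$ force, by separatedness over $\hat K$, $\min_j v((\rho_j-k_j^{(\alpha)})z_j)=\gamma_\alpha$, so any $K$-approximation you extract lands you back at $v(x-\sum_j e_jz_j)=\gamma_\alpha$, precisely the non-attained cut you started from. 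The actual argument for $\neg(3)\Rightarrow\neg(2)$ goes the other way around: one does not test linear disjointness against a fixed amalgam, but \emph{constructs} the offending immediate extension and the common valued extension so that the relation holds by design. Concretely, choose pseudo-limits $\kappa_2,\dots,\kappa_n$ of $(k_{j,\alpha})_\alpha$ in a large valued extension of $L$ and \emph{define} $\kappa_1:=(x-\sum_{j\geq2}\kappa_j z_j)/z_1$; one checks $v(\kappa_1-k_{1,\alpha})\geq\gamma_\alpha-v(z_1)$, so $\kappa_1$ is also a pseudo-limit of $(k_{1,\alpha})_\alpha$, and then (using minimality of $n$ and the hypotheses on $K$ to ensure each adjunction is of transcendental type, hence immediate, and that no $\kappa_j$ is already accounted for) $K':=K(\kappa_1,\dots,\kappa_n)$ is an immediate extension of $K$ over which $x,z_1,\dots,z_n\in L$ satisfy the nontrivial relation $x=\sum_j\kappa_j z_j$, violating (2). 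This construction step --- not precision bookkeeping over a fixed $\hat K$ --- is the missing idea.
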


For a field $F$ we use $F^a$ to denote its field-theoretic algebraic closure. A simple but important corollary of the previous theorem is

\begin{corollary}\label{deloncoro}
Let $(K\subseteq L,v)$ be models of $ACVF$ and $m$ a trivially valued algebraically closed subfield of $L$. Then the extension $(K\subseteq K(m)^a,v)$ is separated.  	
\end{corollary}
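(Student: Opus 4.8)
The plan is to reduce to Delon's criterion (Theorem \ref{delon2}, condition (3)): it suffices to show that for all $n$ and all $x, y_1, \ldots, y_n \in K(m)^a$, the set $I_n(x, y_1, \ldots, y_n; K)$ has a maximal element. First I would note that $K(m)^a$ is an algebraically closed valued field extension of $K$, so the equivalence in Theorem \ref{delon2} applies once we check that the hypotheses on the base (algebraically maximal plus Kaplansky, or simply that $K$ is a model of $ACVF$ hence algebraically closed) are met — they are. So the whole task is to verify condition (3).

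The key observation is that $m$ is \emph{trivially valued} and algebraically closed, so every element of $m$ has valuation $0$, and $m/v$ embeds into $l = L/v$. Since $K$ is algebraically closed, a general element of $K(m)^a = K(m)$ (no further algebraic closure is needed: $K(m)$ is already algebraically closed because $K$ and $m$ are algebraically closed and $m$ is trivially valued, so $K(m)$ is generated by algebraically independent transcendentals over $K$ together with... — in any case one passes to $K(m)^a$, which by the same reasoning is a regular-type extension) can be written in terms of a transcendence basis coming from $m$. Concretely, I would pick a finite tuple $t = (t_1, \ldots, t_r)$ from $m$ such that $x, y_1, \ldots, y_n \in K(t)^a$, with the $t_i$ algebraically independent over $K$ and $v(t_i) = 0$. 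The extension $K \subseteq K(t)$ is then the valued field of rational functions with the Gauss valuation (since $t$ is a tuple of residually independent units), an immediate-residue-extension type situation: $vK(t) = vK$ and $K(t)/v = k(\bar t)$. For such extensions one knows $K$ is "defectless" in $K(t)$ and, crucially, every finitely generated $K$-subspace of $K(t)$ — and of its algebraic closure — admits a separated basis. Equivalently, $I_n(x, y_1, \ldots, y_n; K)$ is a subset of $vK$ that is in fact \emph{finite} or at least attains its sup: because $v(x + \sum e_i y_i)$ only takes values obtained from finitely many "coefficient valuations" determined by writing $x, y_i$ over a separated basis of the $K$-span.

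The cleanest route, which I would actually follow, is: the span $V := xK + y_1 K + \cdots + y_n K$ lies inside $K(t)^a$ for a suitable finite trivially-valued tuple $t$; by a standard fact (e.g. the valued field $K(t)^a$ with $t$ residually transcendental is \emph{separated} over $K$ — this is itself the content of Baur's theorem applied after base change, or can be seen directly since $K(t)/v = k(\bar t)^a$ is separable and $vK(t)=vK$ so the extension is defectless and "residually generated", hence separated), $V$ has a separated basis $y_1', \ldots, y_m'$ over $K$. Then for any $x' \in V$, writing $x' = \sum c_j y_j'$ with $c_j \in K$, one computes $v(x' + \sum e_i y_i) = \min_j v((c_j + d_j) y_j')$ where $d_j \in K$ are the coordinates of $\sum e_i y_i$; this minimum is maximized by choosing $d_j = -c_j$ for those $j$ with $y_j'$ in the span of the $y_i$, giving an explicit maximal element of $I_n$. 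Hence (3) holds and $(K \subseteq K(m)^a, v)$ is separated.

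The main obstacle is the claim that $K \subseteq K(m)^a$ is separated \emph{from the trivial valuation on $m$ alone}, i.e. organizing the reduction to a finitely generated residually-transcendental subextension and invoking the right known separatedness statement for it; once that is in place the verification of Delon's condition (3) is routine linear algebra over the separated basis. One should double-check that passing to the field-theoretic algebraic closure $K(t)^a$ (needed because $x, y_i$ may be algebraic over $K(t)$, not rational in $t$) preserves separatedness over $K$ — this follows from Theorem \ref{delon2} condition (2), linear disjointness from immediate extensions, which is clearly inherited by subextensions and is stable under taking algebraic closure since $K$ is already algebraically closed and the residue/value-group data of $K(t)^a$ over $K$ involves no immediate part.
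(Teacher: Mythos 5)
Your overall strategy --- reduce to a finitely generated subextension $K\subseteq K(t)^a$ with $t$ a finite tuple from $m$ algebraically independent over $K$, show that this subextension is separated, and invoke Theorem \ref{delon2} --- is the right shape, and since the paper offers no argument at all for this corollary (it is presented as immediate from Theorem \ref{delon2}), the content you are trying to supply is exactly the content that needs supplying. But the step you yourself flag as ``the main obstacle'' is where the proof breaks. You assert that the $t_i$, being units of $m$ algebraically independent over $K$, are \emph{residually} independent, i.e.\ that $t_1/v,\ldots,t_r/v$ are algebraically independent over $k=K/v$, so that $K(t)$ carries the Gauss valuation. Triviality of $v$ on $m$ only gives that the residue map is injective on $m$, hence that the $t_i/v$ are algebraically independent over the residue field of $m\cap K$; it says nothing about independence over the possibly much larger field $k$. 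This can genuinely fail: take $K=\mathbb{C}(X)^a$ with $v(X)>0$ and residue field $\mathbb{C}$, let $\epsilon$ (in a large enough $L$) be transcendental over $K$ with $v(\epsilon)>0$ and $K(\epsilon)/K$ immediate (possible since $K$ is not maximal), and let $\pi\in\mathbb{C}$ be transcendental over $\mathbb{Q}$. Then $m:=\mathbb{Q}(\pi+\epsilon)^a$ is a trivially valued algebraically closed subfield of $L$ (for nonzero $P\in\mathbb{Q}[T]$ the residue of $P(\pi+\epsilon)$ is $P(\pi/v)\neq 0$, and algebraic extensions of trivially valued fields are trivially valued), yet $t:=\pi+\epsilon$ has residue in $k$, and $K(m)^a\supseteq K(\epsilon)$ contains a limit point over $K$, so $I_1(\epsilon,1;K)$ has no maximal element and $(K\subseteq K(m)^a,v)$ is \emph{not} separated. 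So the corollary cannot be derived from the stated hypotheses alone, and no rearrangement of your argument will close this particular gap.

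What saves the corollary in the two places the paper applies it is an unstated extra hypothesis: there $m$ contains a lift $k_0\subseteq K$ of the residue field $k$. Under that hypothesis your missing step does go through: the residue map is a field isomorphism of $m$ onto $m/v\supseteq k$, so if $t_r/v$ were algebraic over $k(t_1/v,\ldots,t_{r-1}/v)$ then, pulling back through this isomorphism, $t_r$ would be algebraic over $k_0(t_1,\ldots,t_{r-1})\subseteq K(t_1,\ldots,t_{r-1})$, contradicting algebraic independence over $K$. With residual independence secured, the rest of your plan is essentially sound: $K(t)^a$ is then algebraically disjoint, hence (as $K$ is algebraically closed) linearly disjoint, from every immediate extension of $K$ (an algebraic relation over such an extension would violate the Abhyankar inequality, since the residue field already grows by $r$ transcendentals), so Theorem \ref{delon2} $(2\Rightarrow 1)$ gives separatedness of $K\subseteq K(t)^a$, and separatedness of $K\subseteq K(m)^a$ follows because every finitely generated $K$-subspace lies in such a $K(t)^a$ --- at which point your concluding verification of condition (3) is no longer needed. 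Two smaller slips: $K(m)$ need not be algebraically closed (a compositum of algebraically closed fields generally is not), and your computation of the maximum of $I_n$ uses a separated basis of $xK+\sum y_iK$ \emph{adapted to} the subspace $\sum y_iK$, which requires a word of justification.
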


\begin{lemma}\label{sepquotient} Let $(K\subseteq L,v)$ be an extension of valued fields. Then the following are equivalent:
\begin{enumerate}
	\item the extension is separated;
	\item for some (any) valuation $w\leq v$, both extensions $(K\subseteq L,w)$ and $(K/w\subseteq L/w, v/w)$ are separated.
\end{enumerate}
\end{lemma}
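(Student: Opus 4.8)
The plan is to prove the two implications separately, using the valuation $w \le v$ to stratify the value group: recall that $w \le v$ means $v$ refines $w$, so there is a convex subgroup $\Delta \trianglelefteq vL$ with $wL = vL/\Delta$, and $v/w$ is the valuation on $L/w = \mathcal{O}_w/\mathfrak{m}_w$ induced by $v$, taking values in $\Delta$. The key elementary observation, which I would isolate first, is that for elements $z_1,\dots,z_n \in L$ the single equality
$$v\Bigl(\sum_{i=1}^n z_i\Bigr) = \min_i v(z_i)$$
can be reconstructed from the corresponding equalities at the two levels: knowing $w(\sum z_i) = \min_i w(z_i)$ splits the index set according to which $z_i$ achieve the $w$-minimum, and among those the $v$-values differ from the common $w$-minimum by elements of $\Delta$; passing to residues modulo $w$ (after dividing by a suitable element realizing that $w$-minimum) turns the remaining comparison into a $v/w$-computation on $L/w$. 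Conversely, if the $w$-minima are all distinct the $v$-minimum is automatic; if several coincide, the $v/w$-statement on $L/w$ resolves it. I would state this as a short sublemma about a single separated relation, then quantify over bases.

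For $(1)\Rightarrow(2)$: given $x_1,\dots,x_n \in L$, separatedness of $(K\subseteq L,v)$ gives a $v$-separated basis $y_1,\dots,y_m$ of $\sum x_i K$. A $v$-separated sequence is immediately $w$-separated (the equality $v(\sum k_i y_i) = \min v(k_i y_i)$ forces $w(\sum k_i y_i) = \min w(k_i y_i)$ by applying the order-preserving quotient map $vL \to wL$), so the same basis witnesses separatedness of $(K\subseteq L,w)$. For the residue extension $(K/w \subseteq L/w, v/w)$: take $\bar{x}_1,\dots,\bar{x}_n \in L/w$, lift to $x_i \in \mathcal{O}_w \subseteq L$, extract a $v$-separated basis $y_1,\dots,y_m$ of $\sum x_i K$; after rescaling each $y_j$ by an element of $K^\times$ (allowed since we only care about the $K$-span and the residues $\bar{y}_j$) we may assume $w(y_j) = 0$, whence $\bar{y}_j \in L/w$, and I claim $\bar{y}_1,\dots,\bar{y}_m$ is $v/w$-separated over $K/w$ and spans $\sum \bar{x}_i (K/w)$ — the separated relation passing to residues exactly as in the sublemma. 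One must be slightly careful that the $\bar{y}_j$ remain linearly independent over $K/w$ and still span, but this follows because $w$-separatedness of the $y_j$ means the reduction map is compatible with the $K$-module structure in the required way.

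For $(2)\Rightarrow(1)$: this is the direction where the sublemma does real work. Given $x_1,\dots,x_n \in L$, rescale so each lies in $\mathcal{O}_w$ with $w(x_i) \in \{0,\infty\}$; using separatedness of $(K\subseteq L,w)$ pick a $w$-separated basis, which lets us reduce to analysing, within each "block" of indices sharing a common $w$-value, the images in $L/w$; there separatedness of $(K/w\subseteq L/w, v/w)$ supplies a $v/w$-separated basis, and lifting these back and combining across blocks produces a $v$-separated basis of $\sum x_i K$ by the sublemma. The main obstacle I anticipate is purely bookkeeping: organising the lifts so that the block decomposition of the $K$-span is genuinely direct (a $w$-separated basis does give this, since distinct $w$-values cannot cancel), and checking that a basis assembled block-by-block spans the original space rather than merely a subspace — here one uses that the chosen $w$-separated basis already spans $\sum x_i K$, so it suffices to refine within each block without losing span. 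The phrase "for some (any)" is handled by noting the argument above used nothing about $w$ beyond $w \le v$, so proving the equivalence for one such $w$ and for all such $w$ simultaneously costs nothing extra.
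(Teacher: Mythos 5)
Your proposal is correct and follows essentially the same route as the paper: $w$-separatedness of a $v$-separated sequence is immediate from the order-preserving quotient of value groups, the residue extension is handled by lifting, extracting a $v$-separated basis, normalizing to $w$-value $0$ and reducing, and the converse assembles a $v$-separated basis block-by-block from a $w$-separated one using $v/w$-separated bases of the reduced blocks. The one point you gloss over --- that rescaling each $y_j$ by an element of $K^\times$ to achieve $w(y_j)=0$ requires $w(y_j)\in wK$ --- is secured in the paper by first reducing to the case where the residues $\bar{x}_i$ are linearly independent over $K/w$, so that any lift with $w(x_i)=0$ is automatically a $w$-separated (hence value-controlled) sequence.
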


\begin{proof} We show $1\Rightarrow 2$, so let $w$ be a valuation such that $w\leq v$. The extension $(K\subseteq L,w)$ is $w$-separated: given that $w\leq v$, a $v$-separated sequence must be $w$-separated. So it remains to show that $(K/w\subseteq L/w, v/w)$ is separated. Take $a=(a_1,\ldots,a_n)\in (L/w)^n$. We show there is a $v/w$-separated basis for the $K/w$-vector subspace generated by $a$. Without loss of generality, we may assume that $a_1,\ldots,a_n$ are linearly independent over $K/w$. Therefore, any $b=(b_1,\ldots,b_n)\in L^n$ such that for all $1\leq i\leq n$, $b_i$ satisfies both that $w(b_i)=0$ and $a_i=b_i/w$, is a $w$-separated sequence (and also linearly independent over $K$). Let $b'$ be a $v$-separated basis for the $K$-vector subspace generated by $b$. Thus for each $1\leq j\leq n$, $b_j'=\sum k_{ji} b_i$ where $k_{ji}\in K$ for all $1\leq i\leq n$. Since $b$ is $w$-separated, $w(b_j')=w(k_{ji}b_i)$ for some $1\leq i\leq n$, so modulo some multiplication by an element in $K$, we can assume that $w(b_j')=0$ for all $1\leq j\leq n$ (note that this new sequence is also a $v$-separated basis for the $K$-vector space generated by $b$). It is easy to see that $b'/w$ is $v/w$-separated and clearly a basis for $a$. For the converse, \textit{i.e.}, $2\Rightarrow 1$, let $a=(a_1,\ldots,a_n)\in L^n$ and let $b=(b_1,\ldots,b_r)$ be a $w$-separated basis of the $K$-vector subspace generated by $a$. Let $\{v(b_i):1\leq i\leq r\}=\{\gamma_1,\ldots,\gamma_s\}$ where $\gamma_1<\cdots<\gamma_s$. For $1\leq j\leq s$, let $m_j$, $1\leq m_j\leq r$, be such that $v(b_{m_j})=\gamma_j$ and suppose, by renaming the sequence $b$, that for all $1\leq j\leq s$, all $b_m$ such that $m_j\leq m< m_{j+1}$ have the same value $\gamma_j$. Then for each $1\leq j\leq s$ define the sequence
$$\beta_j:=(b_i b_{m_j}^{-1}/w: m_j\leq i< m_{j+1}).$$
For each $1\leq j\leq s$, there is a $v/w$-separated basis $\alpha_j$ for the $K/w$-vector subspace generated by $\beta_j$. For $a_j'\in \sum b_i b_{m_j}^{-1} K$ an arbitrary lifting of $\alpha_j$, $(b_{m_j}a_j)_{1\leq j\leq s}$ constitutes a $v$-separated basis for the $K$-vector subspace generated by $a$. \end{proof}

We are now ready to prove theorem \ref{charACVF}:

\begin{proof}[of theorem \ref{charACVF}:] The proof idea of $2\Rightarrow 1$ goes back to Delon in \cite{delon:89}; we include it here for completion. By \ref{general1}, $T_n(vK,vL)$ holds for all $n\geq 1$. In order to show $T_n(K,L)$ for all $n\geq 1$, we show instead that $W_n(K,L)$ holds for all $n\geq 1$. By elimination of quantifiers in $ACVF$ in the language of rings together with a predicate for the divisibility relation ($x|y$ if and only if $v(x)\leq v(y)$), it is enough to consider definable subsets of $L$ defined by formulas of the form
\begin{equation}\label{eq:1}
v(P(x))\geq v(Q(x)),
\end{equation}
where $P,Q\in L[X]$ for $X:=(X_1,\ldots,X_n)$. The $K$-vector subspace of $L$ generated by the coefficients in $P$ and $Q$ has a separated basis $\{l_1,\ldots,l_m$\}, which means that we can rewrite $P$ and $Q$ as
	$$P(X)= \Sigma l_ip_i, \text{ with $p_i\in K[X]$}	\hspace{1cm} Q(X)= \Sigma l_iq_i, \text{ with $q_i\in K[X]$}$$
where for $x\in K$ we have
$$v(P(x))=\min\{v(l_ip_i(x));i\}	\hspace{1cm} v(Q(x))=\min\{v(l_iq_i(x));i\}.$$

\noindent Therefore (\ref{eq:1}) is equivalent to
\small
\begin{equation*}
\bigvee_{i_P, l_Q}\left(\bigwedge_i \left(v(l_{i_P}p_{i_P}(x))\leq v(l_ip_i(x)) \wedge v(l_{i_Q}q_{i_Q}(x))\leq v(l_iq_i(x))\right) \wedge v(l_{i_Q}q_{i_Q}(x))\leq v(l_{i_P}p_{i_P}(x))\right).
\label{eq:2}
\end{equation*}
\normalsize
Each inequality of the form

$$v(l'q(x))\leq v(lp(x)),$$
where $l,l'\in L$ and $p,q\in K[X]$ is equivalent to

\begin{equation}
v(l')+v(q(x))\leq v(l)+v(p(x)).
\label{eq:3}
\end{equation}

\noindent By $W_n(vK,vL)$, there is a formula $\phi(u,v)$ with parameters in $K$ such that formula $\phi(v(p(x)),v(q(x)))$ is equivalent to
(\ref{eq:3}). Therefore, the set of points in $K$ satisfying the formula (\ref{eq:1}) is $K$-definable.

\

To show $1\Rightarrow 2$, suppose for a contradiction that $(K\subseteq L,v)$ is an extension such that $T_n(vK,vL)$ holds for all $n$ but the extension is not separated. Also, by Corollary \ref{T1} no point in $L$ is limit over $K$.

\begin{claim}\label{cla:2.1} We can assume that $vK=vL$.
\end{claim}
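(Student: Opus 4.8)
The plan is to use Lemma \ref{sepquotient} to strip from the value-group extension the part lying outside $vK$, thereby reducing everything to an extension of valued fields with the same value group, while keeping track of the standing hypotheses: that $(K\subseteq L,v)$ is not separated, that $T_1(vK,vL)$ holds, that $L$ has no limit point over $K$, and --- since the proof of $1\Rightarrow 2$ runs by contradiction --- that $T_n(K,L)$ holds for every $n$. Since $DOAG$ is model-complete we have $vK\prec vL$, so Proposition \ref{doag} applied to $T_1(vK,vL)$ produces convex subgroups $C_1\trianglelefteq C_2\trianglelefteq vL$ with $C_2=vK\times C_1$ in the lexicographic order and $C_2$ the convex hull of $vK$ in $vL$.

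The reduction is carried out in two steps. \emph{Step 1 (reduce to $vL=C_2$).} Let $w$ be the coarsening of $v$ by $C_2$. Since $vK\subseteq C_2$, the restriction $w|_K$ is trivial, so $K$ embeds as a valued field into $L/w$, which is algebraically closed with value group $C_2$ --- hence a model of $ACVF$, with $K\prec L/w$ by model-completeness --- and the valuation $v/w$ of $L/w$ restricts to $v$ on $K$. In the extension $(K\subseteq L,w)$ the base $K$ is trivially valued, hence maximal (a nested family of balls in a trivially valued field is finite), hence $(K\subseteq L,w)$ is separated by Theorem \ref{baur}; so, as $(K\subseteq L,v)$ is not separated, Lemma \ref{sepquotient} forces $(K\subseteq L/w,v/w)$ to be non-separated. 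Replacing $(K,L,v)$ by $(K,L/w,v/w)$, we may assume $vL=C_2=vK\times C_1$. \emph{Step 2 (reduce to $vK=vL$).} Let $w'$ be the coarsening of $v$ by $C_1$. Since $vK\cap C_1=\{0\}$, the valuation that $v$ induces on $K/w'$ is trivial, so the extension $(K/w'\subseteq L/w',v/w')$ has trivially valued base and hence is separated by Theorem \ref{baur}; Lemma \ref{sepquotient} then forces the coarse extension $(K\subseteq L,w')$ to be non-separated, and it has value group $C_2/C_1=vK$ on both sides. Setting $L^{*}:=L$ and $v^{*}:=w'$, we have reached an extension $K\prec L^{*}$ of models of $ACVF$, non-separated, with $vK=vL^{*}$.

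It remains to transfer the rest of the setup across the reduction. The hypothesis $T_1(vK,vL^{*})$ is now vacuous. That $L^{*}$ has no limit point over $K$ and --- crucially --- that $T_n(K,L^{*})$ holds for every $n$ have to be deduced from the corresponding facts over the original $L$. For this one uses that $L^{*}$, with its full $ACVF$-structure, is interpretable in $L$ once $L$ is equipped with a unary predicate for $K$ (the valuation rings one needs on $L$ have the shape $\{x\in L:\exists k\in K^{\times},\ v(x)\geq v(k)\}$ and simple variants of it); combining this with the equivalence $\bigwedge_{n}T_n(K,L)\Leftrightarrow\bigwedge_{n}W_n(K,L)$ and an analysis of how a set defined in the pair $(L,v,K)$ traces out a subset of a power of $K$, one obtains $\bigwedge_{n}W_n(K,L^{*})$, hence $\bigwedge_{n}T_n(K,L^{*})$. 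I expect this transfer to be the main technical obstacle, the delicate point being to control the quantifiers ranging over $L$ --- not just over $K$ --- inside the interpretation. A cleaner route, when available, is to pick the witnesses of non-separatedness (via Delon's criterion \ref{delon2}) already inside a subfield $L'$ with $K\subseteq L'\subseteq L$ and $vL'=vK$: then $L'\prec L$, every non-definable type over $K$ realized in $L'$ is realized in $L$, and the transfer is immediate.
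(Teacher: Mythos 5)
Your reduction is correct and is essentially the paper's own: the same convex subgroups $C_1\trianglelefteq C_2\trianglelefteq vL$ supplied by Proposition \ref{doag}, two applications of Lemma \ref{sepquotient} combined with Baur's theorem for a trivially valued (hence maximal) base, arriving at the same reduced pair. The only cosmetic difference is the order of operations: the paper first coarsens by $C_1$ (getting that $(K\subseteq L,v/C_1)$ is non-separated) and then kills $C_2$, whereas you first pass to the residue field modulo $C_2$ and then coarsen by $C_1$; the resulting valued field, namely $L/v_2$ with the valuation induced by $v/C_1$, is the same. Where you diverge is your final paragraph, and there you overestimate the difficulty: no interpretation of $L^{*}$ in the pair $(L,K)$ and no wholesale transfer of $\bigwedge_n T_n$ is needed. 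What the rest of the proof extracts from the reduced pair is a single set $D\subseteq vK\times K^{n}$ shown not to be $K$-definable, and $K$, $vK$ and their definable structure are untouched by the reduction; one only has to check that $D$ is still caught by the hypothesis $\bigwedge_n W_n(K,L)$ for the \emph{original} pair. That is a routine unwinding: for $z$ in the valuation ring of $v_2$ and $\gamma\in vK$, the condition $(v_1/v_2)(\bar z)\geq\gamma$ is equivalent to $\forall\delta\in vK^{>0}\,\bigl(v(z)>\gamma-\delta\bigr)$, so $D$ is obtained from the trace on $K^{n+2}$ of an $L$-definable set (hence $K$-definable by $W_{n+2}(K,L)$) by a quantifier ranging over $K$ itself, and would therefore be $K$-definable --- the desired contradiction. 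Likewise the limit-point hypothesis transfers by the paper's one-line observation: every $K$-ball (centre and radius in $K$) containing $l$ for $v$ contains $l/v_2$ for $v_1/v_2$, so a point limit over $K$ downstairs yields one upstairs after shrinking radii within $vK$. Your alternative ``cleaner route'' (locating the witnesses of non-separation in a subfield $L'\subseteq L$ with $vL'=vK$) is neither justified nor needed.
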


By Proposition \ref{doag}, there are convex subgroups $C_1\trianglelefteq C_2\trianglelefteq vL$ such that $C_2=vK\times C_1$.
Consider on $L$ the valuation $v_1:=v/C_1$. It satisfies that $v_1\upharpoonright K= v\upharpoonright K$. This implies that $(v/v_1)(K/v_1)=0$ so the extension $(K/v_1\subseteq L/v_1,v/v_1)$ is separated (note that if $v\upharpoonright K=0$ then $(K,v)$ is maximal, which by Theorem \ref{baur} implies that every extension if separated). By lemma \ref{sepquotient}, the extension $(K\subseteq L,v_1)$ is not separated. Furthermore $v_1K$ is convex in $v_1L$. Thus we can define $v_2=v_1/v_1K$. We have that $v_2 \upharpoonright K=0$, therefore $(K\subseteq L,v_2)$ is separated. Applying lemma \ref{sepquotient} again, $(K/v_2\subseteq L/v_2,v_1/v_2)$ is not separated. But $(v_1/v_2)(K/v_2)=(v_1/v_2)(L/v_2)=vK$, therefore $L/v_2$ is for the valuation $v_1/v_2$ a non-separated extension of $K/v_2$ with the same value group. Furthermore, $L/v_2$ contains no limit point over $K/v_2$ with respect to $v_1/v_2$ as any $l\in L$ such that $l/v_2$ is limit over $K/v_2$ with respect to $v_1/v_2$ will be \emph{a fortiori} limit over $K$ with respect to $v$. This completes the claim.

\

We assume now $vL=vK$. By Theorem \ref{delon2}, let $n$ be a positive integer minimal such that there are $x\in L$ and $y=(y_1,\ldots,y_n)\in L^n$ such that $I_n(x,y;K)$ has no maximal element. Notice that we can assume $n\geq 1$ given that $L$ does not contain any limit point over $K$. Moreover, since $vK=vL$, we can also assume that $v(x)=v(y_i)=0$ for all $1\leq i\leq n$ (notice that for all $e,e_1,\ldots,e_n\in K$, $I_n(ex,(e_1y_1,\ldots,e_ny_n);K)=I_n(x,y;K)+v(e)$).

\begin{claim}\label{cla:2.3} We can assume that $y_1/v,\ldots,y_n/v$ are linearly independent over $K/v$.
\end{claim}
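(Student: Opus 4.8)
The plan is to prove the claim by an extremal argument on the dimension of the residues. Recall that we have reduced to the case $vK=vL$, that $n$ is minimal such that some $I_n(x,y;K)$ lacks a maximal element, and that $v(x)=v(y_i)=0$ for all $i$. Among all pairs $(x,y)\in L\times L^n$ satisfying $v(x)=v(y_i)=0$ for all $i$ and such that $I_n(x,y;K)$ has no maximal element, I would fix one for which the dimension $d$ over $K/v$ of the $K/v$-subspace of $L/v$ generated by $y_1/v,\dots,y_n/v$ is as large as possible; clearly $0\le d\le n$, and the claim amounts to the assertion $d=n$. So suppose for contradiction that $d<n$.

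Then $y_1/v,\dots,y_n/v$ are linearly dependent over $K/v$, so after permuting the $y_i$ — which affects neither $I_n(x,y;K)$, nor the values $v(y_i)$, nor $d$ — I may assume $y_n/v=\sum_{i=1}^{n-1}\bar c_i\,(y_i/v)$ for suitable $\bar c_i\in K/v$. Choose $c_i\in K$ of value $\ge 0$ reducing to $\bar c_i$, and put $z:=y_n-\sum_{i=1}^{n-1}c_iy_i$, so that $v(z)>0$. If $z=0$ then $\sum_{i=1}^{n}y_iK=\sum_{i=1}^{n-1}y_iK$, whence $I_n(x,y;K)=I_{n-1}(x,y_1,\dots,y_{n-1};K)$ would lack a maximal element, contradicting the minimality of $n$; so $z\neq 0$.

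The core of the proof is the following replacement step. Set $V_0:=\sum_{i=1}^{n-1}y_iK$ and recall that $I_m(x',y';K)=\{v(x'+w):w\in\sum_i y'_iK\}$ depends only on the $K$-span $\sum_i y'_iK$. By minimality of $n$, the set $I_{n-1}(y_n,y_1,\dots,y_{n-1};K)=\{\,v(y_n-w):w\in V_0\,\}$ has a maximal element $\mu$; since $\sum_{i=1}^{n-1}c_iy_i\in V_0$ and $v(z)>0$, we get $\mu\ge v(z)>0$. Pick $w^{*}\in V_0$ with $v(y_n-w^{*})=\mu$ and, using $vK=vL$, pick $d^{*}\in K$ with $v(d^{*})=\mu$; put $\tilde y_n:=(d^{*})^{-1}(y_n-w^{*})$. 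Then $v(\tilde y_n)=0$; moreover $y_n=d^{*}\tilde y_n+w^{*}$ and $\tilde y_n\in Ky_n+V_0$, so $\sum_{i=1}^{n-1}y_iK+\tilde y_nK=\sum_{i=1}^{n}y_iK$ and hence $I_n(x,y_1,\dots,y_{n-1},\tilde y_n;K)=I_n(x,y;K)$ still has no maximal element, that is, $(x,y_1,\dots,y_{n-1},\tilde y_n)$ is again admissible. Finally, $\tilde y_n/v$ lies outside the $K/v$-span of $y_1/v,\dots,y_{n-1}/v$: were $\tilde y_n/v=\sum_{i=1}^{n-1}\bar b_i\,(y_i/v)$ with $\bar b_i\in K/v$, lifting the $\bar b_i$ to $b_i\in K$ of value $\ge 0$ would produce $w^{*}+d^{*}\sum b_iy_i\in V_0$ with $v\bigl(y_n-w^{*}-d^{*}\sum b_iy_i\bigr)>v(d^{*})=\mu$, contradicting the maximality of $\mu$. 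Since $y_n/v$ lies in the $K/v$-span of $y_1/v,\dots,y_{n-1}/v$, that span already has dimension $d$, so the residues of the new admissible tuple span a space of dimension $d+1$; this contradicts the maximality of $d$, and the contradiction proves $d=n$.

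I expect this replacement to be the only delicate point. The naive move — rescale $z$ to value $0$ and use it in place of $y_n$ — need not increase $d$, because the next-order part of $z$ may again be a $K/v$-combination of the $y_i/v$'s. One must instead approximate $y_n$ within $V_0$ as well as possible, which is legitimate precisely because $I_{n-1}(y_n,y_1,\dots,y_{n-1};K)$ has a maximum (by the minimality of $n$, equivalently by Delon's criterion, Theorem \ref{delon2}); it is the optimality of the approximant $w^{*}$ that forces the residue of the rescaled remainder out of the span.
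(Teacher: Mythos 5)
Your proof is correct and rests on exactly the same key mechanism as the paper's: by minimality of $n$ the set $I_{n-1}(y_n,(y_i)_{i<n};K)$ has a maximal element $\mu$, and replacing $y_n$ by the rescaled optimal remainder $(d^*)^{-1}(y_n-w^*)$ forces its residue out of the $K/v$-span of the others (by maximality of $\mu$) while changing neither the $K$-span $\sum y_iK$ nor $I_n(x,y;K)$. The paper packages this replacement as an induction on $m$, adjusting one $y_m$ at a time to build a basis with $K/v$-independent residues, whereas you run an extremal argument on the residue dimension $d$ and increase it by one; these are two organizations of the same iteration, so the approach is essentially identical.
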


For $m\leq n$ and $z_1,\ldots,z_m\in L$ we let $\gamma_{m-1}(z_m,(z_1,\ldots,z_{m-1}))$ be the maximal element in $I_{m-1}(z_m,(z_1,\ldots,z_{m-1});K)$ which exists by the minimality of $n$. We show by induction on $m$, with $1\leq m\leq n$, that every $K$-vector subspace of $L$ generated by less than $m$ elements (that we suppose linearly independent over $K$) has a basis $(y_1,\ldots,y_m)$ such that $y_1/v,\ldots,y_m/v$ are linearly independent over $K/v$. By induction, we can assume that $y_1/v,\ldots,v_{m-1}/v$ are linearly independent over $K/v$ (note that $m=1$ is possible here). Since $vK=vL$ we have that $\gamma_{m-1}(y_m,(y_1,\ldots,y_{m-1}))=v(e)$ for some $e\in K$. For $1\leq i\leq m-1$, let $e_i\in K$ be such that $v(y_m+\sum_{i=1}^{m-1}e_iy_i)=v(e)$ and $y_m':=(y_m+\sum_{i=1}^{m-1}e_iy_i)e^{-1}$. If there were $a_1,\ldots,a_m \in K$ such that $v(a_my_m'+\sum_{i=1}^{m-1}a_iy_i)>0$ and $v(a_i)=0$ for all $1\leq i\leq m$, then

\begin{center}
	\begin{tabular}{lll}
$v(a_my_m'+\sum_{i=1}^{m-1}a_iy_i)>0 $ &$\Leftrightarrow$ & $v(y_m'+\sum_{i=1}^{m-1}a_ia_m^{-1}y_i)>0$\\
																			&$\Leftrightarrow$ & $v(((y_m+\sum_{i=1}^{m-1}e_iy_i)e^{-1})+\sum_{i=1}^{m-1}a_ia_m^{-1}y_i)>0$\\
																			&$\Leftrightarrow$ & $v(y_m+(\sum_{i=1}^{m-1}e_iy_i+\sum_{i=1}^{m-1}a_ia_m^{-1}ey_i))>v(e)$,\\

\end{tabular}
\end{center}

\noindent which contradicts the maximality of $v(e)$ in $I_{m-1}(y_m,(y_1,\ldots,y_{m-1});K)$. Therefore elements
$y_1/v,\ldots,y_{m-1}/v,y_m'/v$ are linearly independent over $K/v$, which completes the proof of the claim.

\

Let $(\gamma_\alpha)_{\alpha<\alpha_0}\subseteq I_n(x,y;K)$ be a strictly increasing cofinal sequence in $I_n(x,y;K)$. For $1\leq i\leq n$ let $(k_{i,\alpha})_{\alpha<\alpha_0}$ be sequences such that for each $\alpha<\alpha_0$, $k_{i,\alpha}\in K$ and $\gamma_\alpha=v(x+\sum y_ik_{i,\alpha})$.

\begin{claim}\label{cla:2.4} We can assume that for each $1\leq i\leq n$, the sequence $(k_{i,\alpha})_{\alpha<\alpha_0}$ is a PC sequence (pseudo-Cauchy sequence) with $v(k_{i,\alpha}-k_{i,\beta})=\gamma_\alpha$ whenever $\alpha<\beta<\alpha_0$.
\end{claim}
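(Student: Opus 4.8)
Since $(\gamma_\alpha)_{\alpha<\alpha_0}$ is strictly increasing, it suffices to choose the $k_{i,\alpha}$ so that $v(k_{i,\alpha}-k_{i,\beta})=\gamma_\alpha$ for all $1\le i\le n$ and all $\alpha<\beta<\alpha_0$; then each $(k_{i,\alpha})_{\alpha<\alpha_0}$ is automatically a PC sequence of the required kind. So the plan is to reduce the statement to a residue-field assertion and then to pick the tuples $(k_{1,\alpha},\dots,k_{n,\alpha})$ ``generically enough''.

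Write $x_\alpha:=x+\sum_i y_ik_{i,\alpha}$; then $v(x_\alpha)=\gamma_\alpha$, and by strict monotonicity $v(x_\alpha-x_\beta)=\gamma_\alpha$ for $\alpha<\beta$. From $v(y_i)=0$ and the $K/v$-linear independence of $y_1/v,\dots,y_n/v$ (Claim \ref{cla:2.3}) one gets, by scaling by an element of suitable valuation and passing to residues, the estimate $v(\sum_i c_iy_i)=\min_i v(c_i)$ for all $c\in K^n$; applied to $c_i=k_{i,\alpha}-k_{i,\beta}$ it yields $\min_i v(k_{i,\alpha}-k_{i,\beta})=\gamma_\alpha$, so in any case $v(k_{i,\alpha}-k_{i,\beta})\ge\gamma_\alpha$, and the point is to rule out strict inequality. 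Here the minimality of $n$ enters: for $1\le i\le n$ and $z\in L$ the quantity
$$g_i(z):=\max I_{n-1}\bigl(x+zy_i,\,(y_j)_{j\ne i};\,K\bigr)$$
is well defined, and $g_i(k_{i,\alpha})\ge\gamma_\alpha$ always. Moreover $g_i(k_{i,\alpha})=\gamma_\alpha$ already forces $v(k_{i,\alpha}-k_{i,\beta})=\gamma_\alpha$ for every $\beta>\alpha$: otherwise $v(k_{i,\alpha}-k_{i,\beta})>\gamma_\alpha$, and then $x+k_{i,\alpha}y_i+\sum_{j\ne i}k_{j,\beta}y_j=x_\beta+(k_{i,\alpha}-k_{i,\beta})y_i$ has valuation $>\gamma_\alpha$, contradicting $g_i(k_{i,\alpha})=\gamma_\alpha$. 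Thus it is enough to arrange $g_i(k_{i,\alpha})=\gamma_\alpha$ for all $i$.

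The core of the argument is then the following selection lemma: for every $\gamma\in I_n(x,y;K)$ there is $e\in K^n$ with $v(x+\sum_i e_iy_i)=\gamma$ and $g_i(e_i)=\gamma$ for all $i$. To prove it, fix $e^0\in K^n$ with $v(x+\sum_i e^0_iy_i)=\gamma$ and $t\in K$ with $v(t)=\gamma$, and look for $e_i=e^0_i+tc_i$ with $v(c_i)=0$. Put $u:=\bigl((x+\sum_i e^0_iy_i)/t\bigr)/v\in L/v$; using that $I_n(x,y;K)$ has no maximum one checks that $u$ lies in the $K/v$-span of $y_1/v,\dots,y_n/v$, say $u=\sum_i\sigma_i(y_i/v)$. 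By the estimate above, $\{e'\in K^n: v(x+\sum_i e'_iy_i)>\gamma\}$ and its $i$-th projection $\{z:g_i(z)>\gamma\}$ are (when non-empty) ``open balls'' $\{z: v(z-a_i)>\gamma\}$ sitting inside the ``closed ball'' $\{z: v(z-e^0_i)\ge\gamma\}$, singled out by a residue $\tau_i\in K/v$. One then obtains: $v(x+\sum_i e_iy_i)=\gamma$ as soon as $\sum_i(c_i/v)(y_i/v)\ne -u$, hence in particular as soon as $c_1/v\ne-\sigma_1$; and $g_i(e_i)=\gamma$ as soon as $c_i/v\notin\{0,\tau_i\}$. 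Since $K/v$ is infinite (it is a model of $ACF$), one may choose $c_1/v\in(K/v)\setminus\{0,\tau_1,-\sigma_1\}$ and $c_i/v\in(K/v)\setminus\{0,\tau_i\}$ for $i\ge2$, which proves the lemma.

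To conclude, replace the given data as follows: pick a strictly increasing cofinal sequence $(\gamma_\alpha)_{\alpha<\alpha_0}$ in $I_n(x,y;K)$, apply the selection lemma at each $\gamma_\alpha$ to get $e^{(\alpha)}\in K^n$, and set $k_{i,\alpha}:=e^{(\alpha)}_i$. Then $v(x+\sum_i y_ik_{i,\alpha})=\gamma_\alpha$ and $g_i(k_{i,\alpha})=\gamma_\alpha$ for all $i$, so by the remark above $v(k_{i,\alpha}-k_{i,\beta})=\gamma_\alpha$ for all $i$ and all $\alpha<\beta$, as wanted. The delicate point—and essentially the only place real work is needed—is the selection lemma, namely the bookkeeping of the finitely many residues $c_i/v$ that must be avoided in order to keep $v(x+\sum e_iy_i)$ exactly at $\gamma$ while simultaneously pushing each $e_i$ into the ``annulus'' on which $g_i$ takes the value $\gamma$; it is precisely the minimality of $n$ that makes each $g_i$ finite-valued, so that the requirement $g_i(e_i)=\gamma$ can be met at all.
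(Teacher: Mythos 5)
Your proof is correct, but it takes a genuinely different route from the paper's. The paper keeps the originally given coefficients $k_{i,\alpha}$ and argues that they already satisfy the claim after passing to a cofinal subset of $\alpha_0$: separatedness of $(y_1,\dots,y_n)$ gives $\min_i v(k_{i,\alpha}-k_{i,\beta})=\gamma_\alpha$, and if some coordinate $l$ had $v(k_{l,\alpha}-k_{l,\beta})>\gamma_\alpha$ on a whole tail, then freezing that coordinate at $k_{l,\alpha_1}$ (i.e.\ replacing $x$ by $x'=x+k_{l,\alpha_1}y_l$) would make $(\gamma_\alpha)$ cofinal in $I_{n-1}(x',(y_i)_{i\neq l};K)$, contradicting the minimality of $n$. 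You instead \emph{re-choose} the witnesses: you introduce $g_i(z)=\max I_{n-1}(x+zy_i,(y_j)_{j\ne i};K)$ --- well defined by the same minimality of $n$ --- observe that $g_i(k_{i,\alpha})=\gamma_\alpha$ already forces the desired equality for every $\beta>\alpha$, and prove a selection lemma producing, at each level $\gamma_\alpha$, a tuple realizing $\gamma_\alpha$ all of whose coordinates satisfy $g_i=\gamma_\alpha$, by lifting residues that avoid finitely many bad values in the infinite field $K/v$. Both arguments rest on the same two pillars (the separated basis coming from Claim \ref{cla:2.3} and the minimality of $n$ applied to $I_{n-1}$), but yours is constructive where the paper's is by contradiction, and it buys the exact equalities $v(k_{i,\alpha}-k_{i,\beta})=\gamma_\alpha$ for \emph{all} $i$ and all $\alpha<\beta$ simultaneously, with no need to pass to a cofinal subsequence (the paper obtains, for each $i$ separately, equality only on a cofinal set of $\alpha$'s). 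The price is the residue-field bookkeeping of the selection lemma, which also uses $vK=vL$ to find the scaling element $t$ with $v(t)=\gamma$; this is harmless, since that normalization was already made in Claim \ref{cla:2.1}.
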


By Claim \ref{cla:2.3} we can assume that $y_1,\ldots,y_n$ is a separated basis with $v(y_i)=0$ for any $1\leq i\leq n$. Therefore, for $\alpha<\beta<\alpha_0$, given that $v(\sum y_i(k_{i,\alpha}-k_{i,\beta}))=\gamma_\alpha$, by separation there is $1\leq i\leq n$ such that $\gamma_\alpha=v(y_i)+v(k_{i,\alpha}-k_{i,\beta})=v(k_{i,\alpha}-k_{i,\beta})$. As a consequence, there must be $1\leq j\leq n$ such that, for $\alpha$ in a cofinal subset of $\alpha_0$ and any $\beta>\alpha$, we have $\gamma_\alpha=v(k_{j,\alpha}-k_{j,\beta})$. So $(k_{j_\alpha})_{\alpha<\alpha_0}$ is a PC sequence. Furthermore, for any $1\leq i\leq n$ we have that $v((k_{i,\alpha}-k_{i,\beta}) y_i)\geq \gamma_\alpha$ by definition of separated basis. By the minimality of $n$ we have in fact equality for $\alpha$ cofinal in $\alpha_0$. For suppose there is $1\leq l\leq n$ and some $\alpha_1<\alpha_0$ such that for all $\alpha_1\leq\alpha<\beta<\alpha_0$, $v((k_{l,\alpha}-k_{l,\beta}) y_l)> \gamma_{\alpha}$. Then for $\alpha_1\leq \alpha<\alpha_0$ we will also have 
$$v(x+\sum_{i=1}^n k_{i,\alpha}y_i)=\gamma_\alpha=v(x+k_{l,\alpha_1}y_l + \sum_{i\neq l}k_{i,\alpha}y_i),$$
so taking $x'=x+k_{l,\alpha_1}y_l$ will get that $\gamma_\alpha \in I_{n-1}(x',(y_i)_{i\neq l};K)$ for all $\alpha_1<\alpha<\alpha_0$, which contradicts the minimality of $n$ since clearly $I_{n-1}(x',(y_i)_{i\neq l};K)\subseteq I_n(x,y;K)$. Therefore each sequence $(k_{i,\alpha})_{\alpha<\alpha_0}$ is PC.

\begin{claim}\label{cla:2.5} For all $1\leq i\leq n$, the sequence $(k_{i,\alpha})_{\alpha<\alpha_0}$ does not have a pseudo-limit in $K$.
\end{claim}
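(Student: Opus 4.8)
The plan is to argue by contradiction. Suppose some sequence $(k_{i_0,\alpha})_{\alpha<\alpha_0}$ does have a pseudo-limit $k\in K$. The idea is that one can then absorb $y_{i_0}k$ into the term $x$ and drop the variable $y_{i_0}$, contradicting the minimality of $n$ chosen via Theorem \ref{delon2}.

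First I would make this substitution precise. Set $x' := x + y_{i_0}k$ and consider the list $(y_i)_{i\neq i_0}$. The claim will be that $\gamma_\alpha \in I_{n-1}(x',(y_i)_{i\neq i_0};K)$ for a cofinal set of $\alpha<\alpha_0$, which already forces $I_{n-1}(x',(y_i)_{i\neq i_0};K)$ to have no maximal element (it is cofinal-equivalent to $I_n(x,y;K)$, which has none), contradicting minimality of $n$. To verify the claim, fix $\alpha$ and pick $\beta>\alpha$ large enough that $v(k_{i_0,\alpha}-k)=v(k_{i_0,\alpha}-k_{i_0,\beta})=\gamma_\alpha$, using that $k$ is a pseudo-limit and Claim \ref{cla:2.4}. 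Then compute
\begin{align*}
v\!\left(x' + \sum_{i\neq i_0} k_{i,\alpha}y_i\right)
&= v\!\left(x + \sum_{i=1}^n k_{i,\alpha}y_i + y_{i_0}(k-k_{i_0,\alpha})\right).
\end{align*}
The first summand $x + \sum_{i} k_{i,\alpha}y_i$ has value $\gamma_\alpha$, and $v(y_{i_0}(k-k_{i_0,\alpha})) = v(y_{i_0}) + \gamma_\alpha = \gamma_\alpha$ since $v(y_{i_0})=0$. So a priori the two terms could cancel; this is exactly the point where I must be slightly careful. Here I would use that $(k_{i_0,\alpha})$ is a PC sequence with the precise valuation data of Claim \ref{cla:2.4}: replacing $k_{i_0,\alpha}$ by $k_{i_0,\beta}$ for suitably large $\beta$, the quantity $v(y_{i_0}(k - k_{i_0,\beta}))$ becomes strictly larger than $v(y_{i_0}(k_{i_0,\alpha}-k_{i_0,\beta}))=\gamma_\alpha$, whereas $v(x+\sum_i k_{i,\beta}y_i)=\gamma_\beta>\gamma_\alpha$ as well; rewriting $x+\sum_{i\ne i_0}k_{i,\alpha}y_i + y_{i_0}k$ in terms of the $\beta$-indexed approximation, all error terms have value $>\gamma_\alpha$ except the one of value exactly $\gamma_\alpha$ coming from $\sum_{i\ne i_0}(k_{i,\alpha}-k_{i,\beta})y_i$ — and by Claim \ref{cla:2.4} this term does have value exactly $\gamma_\alpha$ (for a cofinal set of $\alpha$, the relevant index realizing the minimum in the separated-basis computation can be taken $\neq i_0$, again by minimality of $n$). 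Hence the ultrametric inequality gives value exactly $\gamma_\alpha$, so $\gamma_\alpha \in I_{n-1}(x',(y_i)_{i\ne i_0};K)$.

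The main obstacle I anticipate is precisely controlling this potential cancellation: one needs to combine the separated-basis property (so that valuations of sums are minima, hence the minimum-realizing index can be tracked) with the PC-sequence estimates of Claim \ref{cla:2.4} to guarantee that after absorbing the pseudo-limit, the ``surviving'' term of value $\gamma_\alpha$ never sits on coordinate $i_0$, and that no other term drops down to meet it. Once that bookkeeping is done, the inclusion $I_{n-1}(x',(y_i)_{i\ne i_0};K)\subseteq I_n(x,y;K)$ together with cofinality of the $\gamma_\alpha$ contradicts the minimality of $n$, proving the claim for every $i$.
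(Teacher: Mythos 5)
Your argument is correct and follows essentially the same route as the paper: absorb the pseudo-limit into $x$ via $x':=x+ky_{i_0}$ and contradict the minimality of $n$ through $I_{n-1}(x',(y_i)_{i\neq i_0};K)$ (the paper states this as the identity $I_{n-1}(x-ky_i,y';K)=I_n(x,y;K)$ and leaves the verification to the reader). The only remark is that your careful bookkeeping against cancellation is not actually needed: the ultrametric inequality already gives $v\left(x'+\sum_{i\neq i_0}k_{i,\alpha}y_i\right)\geq\gamma_\alpha$, and since $I_{n-1}(x',(y_i)_{i\neq i_0};K)\subseteq I_n(x,y;K)$ and the $\gamma_\alpha$ are cofinal in the latter, this weaker estimate alone forces $I_{n-1}(x',(y_i)_{i\neq i_0};K)$ to have no maximal element.
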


Suppose for a contradiction there is $1\leq i\leq n$ such that $(k_{i,\alpha})_{\alpha<\alpha_0}$ has a pseudo-limit $k\in K$. Let $y'=(y_1,\ldots,y_{i-1},y_{i+1},\ldots,y_n)$. Given that for each $\alpha<\alpha_0$, $v(k_{i,\alpha}-k)=\gamma_\alpha$, it is not difficult to show that $I_{n-1}(x-ky_i,y';K)=I_n(x,y;K)$, which contradicts the minimality of $n$.

\

Let $K^\ast$ be a $|K|^+$-saturated extension of $K$ and for each $1\leq i\leq n$ let $k_i^\ast\in K^\ast$ be a pseudo-limit for $(k_{i,\alpha})_{\alpha<\alpha_0}$. By construction, we have that for all $e_1,\ldots,e_n\in K^n$ and $\alpha<\alpha_0$
$$v\left(x+\sum_{i=1}^n e_iy_i\right)>\gamma_\alpha \Leftrightarrow \bigwedge_{i=1}^n v(e_i-k_i^\ast)>\gamma_\alpha.$$

\noindent We show that the set $D:=\{(\gamma,k_1,\ldots,k_n)\in vK\times K^n: v(x+\sum k_iy_i)\geq \gamma\}$ is not definable in $K$, which implies that $T_{n+1}(K,L)$ does not hold. Indeed, if it was, the projections on $vK\times K$ (where $K$ is say the first copy in $K^n$) will also be definable. Given that $ACVF$ is definably maximal, a pseudo-limit of $(k_{1,\alpha})_{\alpha<\alpha_0}$ would belong to $K$ contradicting Claim \ref{cla:2.5}. \end{proof}

\section{Counterexample}

As stated in the introduction, this section is devoted to Proposition \ref{counterexample}. 

\

\begin{proof}[Proof of Proposition \ref{counterexample}:] The idea is to build an extension $K\prec L$ of algebraically closed valued fields with the following properties:
\begin{enumerate}[$\cdot$]
	\item $td(L;K)=n+1$
	\item The extension $(K\subseteq L,v)$ is not separated
	\item For any $M$ such that $K\subseteq M\subseteq L$ and $td(M;K)\leq n$, the extension $(K\subseteq M, v)$ is separated.
\end{enumerate}

Let $k$ be an algebraically closed field and $K=k(X,Y)^a$ with $0=v(k)<v(X)<<v(Y)$. Consider elements in a big valued field extension of $K$ with the following properties:

\begin{enumerate}
		\item $\bar{t}:=(t_1,\ldots, t_n)$ such that $v(k(t_1,\ldots,t_n))=0$ and $t_1/v,\ldots,t_n/v$ are algebraically independent over $k$;
		\item $\bar{f}:=(f_1,\ldots,f_{n+1})$, with $f_1,\ldots,f_{n+1}\in k((X))$ algebraically independent over $k(X)$, and $v(f_i)=0$ for all $1\leq i\leq n+1$;
		\item $t_{n+1}:= \sum_{i=1}^n f_it_i +f_{n+1}$.
\end{enumerate}

\noindent Note that $f_1,\ldots,f_{n+1}$ are still algebraically independent over $k(\bar{t},X)$. Indeed, $k((X))$ and $k(\bar{t},X)$ are linearly disjoint over $k(X)$ by Proposition \ref{delon2} $(1\Rightarrow 2)$. Finally, let $L:=K(t_1,\ldots,t_{n+1})^a$. By construction, $k(\bar{t},X,Y)^a\subseteq L\subseteq k(\bar{t})((X))(Y)^a$, hence $vK=vL$ and $L/v=k(t_1/v,\ldots,t_n/v)^a$. The result is a consequence of the following two claims:

\begin{claim}\label{claim1} $\mathbb{Z}v(X)$ is cofinal in $I_{n+1}(t_{n+1}, 1,t_1,\ldots,t_n;K)$.
\end{claim}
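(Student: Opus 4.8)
The plan is to compute $I_{n+1}(t_{n+1},1,t_1,\dots,t_n;K)$ explicitly. Working inside a valued field $L'\supseteq L$ that contains the individual $f_i$ (these need not lie in $L$), and writing $f_0:=f_{n+1}$, substitution of $t_{n+1}=\sum_{i=1}^{n}f_it_i+f_{n+1}$ gives
\[
I_{n+1}(t_{n+1},1,t_1,\dots,t_n;K)=\Bigl\{\,v\bigl((f_0+e_0)+\textstyle\sum_{i=1}^{n}(f_i+e_i)t_i\bigr)\ :\ e_0,\dots,e_n\in K\,\Bigr\}.
\]
The first step is to check that $(1,t_1,\dots,t_n)$ is a separated sequence over $K(\bar f)$. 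Since each $f_i\in k((X))$ and $k((X))$ sits as a valued subfield of the completion $\widehat K$ of $(K,v)$, which is an immediate extension of $K$, adjoining $\bar f$ changes neither the value group nor the residue field; hence $K(\bar f)/v=K/v=k$, and the algebraic independence of $t_1/v,\dots,t_n/v$ over $k$ forces $1,t_1/v,\dots,t_n/v$ to be $k$-linearly independent, which is exactly separation of $(1,t_1,\dots,t_n)$ over $K(\bar f)$. Consequently, for all $e_j\in K$,
\[
v\bigl((f_0+e_0)+\textstyle\sum_{i=1}^{n}(f_i+e_i)t_i\bigr)=\min_{0\le j\le n}v(f_j+e_j),
\]
so that $I_{n+1}=\{\,\min_{0\le j\le n}\gamma_j:\gamma_j\in S_j\,\}$ with $S_j:=\{v(f_j+e):e\in K\}$.

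It then remains to locate the sets $S_j$ with respect to $\mathbb Z v(X)$. For the upward cofinality, since $f_j\in k((X))$ is transcendental over $k(X)$ it is not a polynomial in $X$, so subtracting the truncations $\sum_{m<N}a_{j,m}X^m\in K$ of its power series realizes the value $N'v(X)$ for cofinally many $N'\in\mathbb N$; choosing all the $\gamma_j$ simultaneously large then shows that $I_{n+1}$ is cofinal in $\mathbb Z v(X)$. For the bound in the other direction I would pass to the coarsening $v_C:=v/C$, where $C$ is the convex hull of $\mathbb Z v(X)$ in $v(L')$. Because $v(X)\ll v(Y)$, the value group $v_C(K)=v(K)/C$ has $\mathbb Q$-rank $1$, so Abhyankar's inequality gives $td(K/v_C\,;k)\le td(K;k)-1=1$; since $K/v_C$ contains (the image of) $k(X)$ and is algebraically closed ($K$ being algebraically closed), $K/v_C=k(X)^a$. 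Now $v_C$ is trivial on $k((X))$, so the image of $f_j$ in $L'/v_C$ is transcendental over $k(X)$ and therefore does not belong to $K/v_C$; it follows that there is no $e\in K$ with $v(f_j+e)>C$, i.e.\ every element of $S_j$ is $\le mv(X)$ for some $m\in\mathbb Z$. As the minimum defining any element of $I_{n+1}$ is attained at one of the $S_j$, every element of $I_{n+1}$ is bounded above by an integer multiple of $v(X)$. Together with the previous observation this yields that $\mathbb Z v(X)$ is cofinal in $I_{n+1}$ (in particular $I_{n+1}$ has no greatest element).

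The main obstacle is the last step: showing that no element of $K$ is $v$-adically closer to $f_j$ than every $mv(X)$. This is exactly where the hypothesis $v(X)\ll v(Y)$ is used — it is what makes the coarsening $v_C$ non-trivial and pins down $K/v_C=k(X)^a$, allowing the transcendence of $f_j$ over $k(X)$ to be played off against it. The remaining ingredients (the separation reduction and the truncation argument) are routine once this is in place.
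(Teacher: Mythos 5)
Your argument is correct in substance and its decisive step is the same as the paper's: both proofs bound $I_{n+1}(t_{n+1},1,t_1,\dots,t_n;K)$ above by coarsening along the convex subgroup $C=\mathbb{Q}v(X)$, identifying $K/(v/C)$ with $k(X)^a$, and playing this off against the transcendence of the $f_j$ over $k(X)$; the lower bound via truncations is also the same. The packaging differs slightly: you first prove that $(1,t_1,\dots,t_n)$ is separated over $K(\bar f)$, which turns $I_{n+1}$ into the set of minima of the one-variable sets $S_j=\{v(f_j+e):e\in K\}$ and lets you use only the transcendence of each $f_j$ separately, whereas the paper skips this reduction and reads the hypothetical relation $\bigl(f_{n+1}+e_{n+1}+\sum_{i}t_i(f_i+e_i)\bigr)/w=0$ directly as a nontrivial polynomial relation over $k(\bar t,X)^a$, contradicting the algebraic independence of $f_1,\dots,f_{n+1}$ over $k(\bar t,X)$ established just before the claims. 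Your version buys an explicit description of $I_{n+1}$, at the cost of having to justify the separation step.

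That justification contains the one genuine error: $k((X))$ does \emph{not} embed into the completion $\widehat K$ of $(K,v)$. A net in $K$ converging in $\widehat K$ must eventually have differences of value exceeding $v(Y)$, while the truncations of $f_j$ approximate it only to within $\mathbb{Z}v(X)$; indeed, the very statement you are proving says that \emph{no} element of $K$ is within $v(Y)$ of $f_j$, so $f_j\notin\widehat K$. The fact you actually need --- that $K(\bar f)$ has residue field $k$, so that $1,\bar t_1,\dots,\bar t_n$ are linearly independent over the residue field of $K(\bar f)$ --- is nevertheless true and easily repaired: $K(\bar f)\subseteq k((X))(Y)^a$, and the latter has residue field $k$ and value group $\mathbb{Q}v(X)\oplus\mathbb{Q}v(Y)=vK$, so $K\subseteq K(\bar f)$ is immediate. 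With that substitution your proof goes through.
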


\begin{claim}\label{claim2} For any field $M$ such that $K\subseteq M\subset L$ and $td(M,K)\leq n$, $M$ is a separated extension of $K$.
\end{claim}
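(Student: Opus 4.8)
The plan is to reduce, via Corollary \ref{deloncoro}, to showing that every such $M$ is \emph{Abhyankar-tight} over $K$, and then to prove tightness by exploiting that the whole "defect" of the extension $L/K$ is carried by the single element $t_{n+1}$.

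First I would make harmless reductions. Replacing $M$ by its algebraic closure inside $L$, I may assume $M$ is algebraically closed: a subextension of a separated extension is separated (a finitely generated $K$-subspace of $M$ is one of $M^a$, and a separated basis of it already lies in $M$), $\mathrm{td}(M^a;K)=\mathrm{td}(M;K)\le n<n+1=\mathrm{td}(L;K)$ so $M^a\subsetneq L$, and $M^a$ still satisfies the hypotheses. Since $vK=vL$ we get $vM=vK$; since $K=k(X,Y)^a$ with $v(X),v(Y)>0$ we have $K/v=k$, so $M/v$ is an algebraically closed subfield of $L/v=k(\bar t/v)^a$ containing $k$. Write $d=\mathrm{td}(M;K)$ and $d'=\mathrm{td}(M/v;k)$; the fundamental (Abhyankar) inequality gives $d\ge d'$.

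The main reduction is that it suffices to prove $d=d'$. Indeed, if $d=d'$, lift a transcendence basis of $M/v$ over $k$ to elements $a_1,\dots,a_d\in M$ with $v(a_i)=0$; a Gauss-valuation argument (using that the residues are algebraically independent over $k$) shows the $a_i$ are algebraically independent over $K$, hence a transcendence basis of $M$ over $K$, so $M=K(a_1,\dots,a_d)^a$; the same argument shows $v$ is trivial on $k(a_1,\dots,a_d)$, and the algebraic closure of a trivially valued field is trivially valued, so $m:=k(a_1,\dots,a_d)^a$ is a trivially valued algebraically closed subfield of $L$. Corollary \ref{deloncoro} then gives that $M=K(m)^a$ is separated over $K$. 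So assume for contradiction $d>d'$. Picking $a_1,\dots,a_{d'}\in M$ with $v(a_i)=0$ lifting a transcendence basis of $M/v$ over $k$ and setting $M_0:=K(a_1,\dots,a_{d'})^a\subseteq M$, the field $M_0$ is an algebraically closed (hence $ACVF$) field, separated over $K$ by Corollary \ref{deloncoro}, with $M_0/v=M/v$ and $vM_0=vK$; thus $M$ is a \emph{proper} immediate extension of $M_0$, so $M$ — and therefore $L$ — contains a point limit over $M_0$. Now extend $a_1,\dots,a_{d'}$ to $a_1,\dots,a_n\in L$ with $v(a_i)=0$ lifting a transcendence basis of $L/v=k(\bar t/v)^a$ over $k$; then $L_0:=K(a_1,\dots,a_n)^a$ is separated over $K$ and over $M_0$ (Corollary \ref{deloncoro}, the latter with $K$ replaced by $M_0$), satisfies $L_0/v=L/v$, $vL_0=vK$, $\mathrm{td}(L_0;K)=n$, and $L$ is an immediate extension of $L_0$ of transcendence degree one. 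A point $z\in L$ limit over $M_0$ cannot lie in $L_0$ (by Theorem \ref{delon2}, taking $n=1$ and $y_1=1$, a separated extension of $M_0$ contains no point limit over $M_0$), so $z$ is transcendental over $L_0$ and $L_0(z)^a=L$. The contradiction is then obtained by tracking $z$ back through the defining relation $t_{n+1}=\sum_{i=1}^n f_it_i+f_{n+1}$ with $f_i\in k((X))$: because the $f_i$ are $X$-adic, any approximation of $z$ by elements of $M_0$ has valuations cofinal in $\mathbb Z v(X)$, and by Claim \ref{claim1} this is the only source of unboundedness available, so producing a genuine limit point of $L$ over $M_0$ costs a full transcendence degree $n+1$; hence $\mathrm{td}(M;K)\ge n+1$, contradicting $d\le n$.

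The hard part — and the main obstacle — is precisely this last step: showing that $L$ contains no point limit over the residue subfield $M_0$, i.e., that the single defect of $L/K$ sitting in $t_{n+1}$ cannot descend to a subfield of transcendence degree at most $n$. An alternative presentation of the same point uses Theorem \ref{delon2}(2) directly: if $M$ were not linearly disjoint over $K$ from some immediate extension $\hat K$ of $K$, one takes a relation $z_0=\sum_{j=1}^s g_jz_j$ with $z_j\in M$ linearly independent over $K$, $g_j\in\hat K\setminus K$ (hence transcendental over $K$, as $K$ is algebraically closed), and $s$ minimal; normalizing $v(z_j)=0$ and using minimality to arrange $z_1/v,\dots,z_s/v$ linearly independent over $k$ as in the proof of Theorem \ref{charACVF}, one then argues that inside $L$ such a minimal relation is forced to mimic $t_{n+1}=\sum f_it_i+f_{n+1}$, so that $z_0,\dots,z_s$ generate over $K$ a field of transcendence degree $\ge n+1$ — again contradicting $\mathrm{td}(M;K)\le n$.
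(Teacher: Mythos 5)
Your setup is sound and parallels the paper's: the reduction to $M$ algebraically closed, the dichotomy $d=d'$ versus $d=d'+1$ (your $d>d'$), and the treatment of the tight case $d=d'$ via Corollary \ref{deloncoro} all match the paper's computation showing $td(M;K(m)^a)\le 1$ and its disposal of the case $td(M;K(m)^a)=0$. The problem is that in the remaining case --- the only case with content --- both of your routes stop exactly where the proof has to begin. ``By Claim \ref{claim1} this is the only source of unboundedness available, so producing a genuine limit point of $L$ over $M_0$ costs a full transcendence degree $n+1$'' and, in the alternative presentation, ``one then argues that \dots such a minimal relation is forced to mimic $t_{n+1}=\sum f_it_i+f_{n+1}$'' are restatements of the conclusion, not arguments. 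Claim \ref{claim1} says nothing about subfields $M_0\subsetneq M\subset L$; nothing you have written rules out, a priori, that some cleverly chosen element of $M$ (built from the $t_i$ and $t_{n+1}$) is already a pseudo-limit of a PC sequence from $M_0$ even though $td(M;K)\le n$. You have correctly located the obstacle, but you have not removed it.

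The paper closes this gap with a linear-disjointness/field-of-definition argument that your proposal never reaches. In the bad case one has $L=M(l)^a$, so by Corollary \ref{deloncoro} the extension $M\subseteq L$ is separated; on the other hand $M\subseteq K(m,\bar f)^a$ is an immediate extension, so by Theorem \ref{delon2} (separated $\Rightarrow$ linearly disjoint from immediate extensions) $L$ and $K(m,\bar f)^a$ are linearly disjoint over $M$. Linear disjointness forces the field of definition of the ideal $Ann(t_1,\ldots,t_{n+1};K(m,\bar f)^a)$ to lie in $M$; since that ideal is generated by $X-\left(\sum_{i=1}^n T_if_i+f_{n+1}\right)$, all of $f_1,\ldots,f_{n+1}$ land in $M$, whence $td(M;K)\ge n+1$, a contradiction. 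This is precisely the mechanism by which the defining relation of $t_{n+1}$ ``costs'' transcendence degree $n+1$, and it is the step your proposal asserts rather than proves. To repair your write-up you would need to supply this (or an equivalent) argument; as it stands the proof is incomplete.
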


Suppose for the sake of argumentation both claims are true. By theorem \ref{charACVF}, Claim \ref{claim2} shows $T_i(K,L)$ for all $i\leq n$. Claim \ref{claim1} shows that $T_{n+1}(K,L)$ is not true since the definability of the type $tp(t_1,\ldots,t_{n+1}/K)$ would imply the definability of $I_{n+1}(t_{n+1}, 1,t_1,\ldots,t_n;K)$, which defines a cut in $vK$. We proceed to prove the claims.

\

\emph{Proof of Claim \ref{claim1}:} We have that 
\small
$$I_{n+1}(t_{n+1}, 1,t_1,\ldots,t_n;K)=\left\{v\left(f_{n+1}+e_{n+1}+\sum_{i=1}^n t_i(f_i+e_i)\right): e_1,\ldots,e_{n+1}\in K\right\}.$$
\normalsize
Now $v(t_i)=0$ for all $1\leq i\leq n$, $f_i\in k((X))$ for all $1\leq i\leq n+1$ and $K$ contains $k(X)$. This shows $\mathbb{Z}v(X)\subseteq I_{n+1}(t_{n+1}, 1,t_1,\ldots,t_n;K)$. It remains then to show $I_{n+1}(t_{n+1},1,t_1,\ldots,t_n;K)\subseteq C$ where $C\trianglelefteq L$ is the biggest convex subgroup of $\mathbb{Q}\times\mathbb{Q}$ not containing $v(Y)$. Suppose towards a contradiction that there are $e_1\ldots,e_{n+1}\in K$ such that for $v(t_{n+1}+e_{n+1}+\sum_{i=1}^n e_it_i)>C$. Thus, if we introduce the valuation $w:=v/C$, we have
$$\left(\sum_{i=1}^n f_it_i +f_{n+1}+e_{n+1}+ \sum_{i=1}^n e_it_i\right)/w= 0$$
which contradicts that $f_1,\ldots,f_{n+1}$ are algebraically independent over $k(\bar{t},X)$ given that $K/w=k(X)^a\subseteq k((X))^a$. This completes the Claim \ref{claim1}.

\

\emph{Proof of Claim \ref{claim2}:} We have the following diagram: 

\begin{center}
	\includegraphics{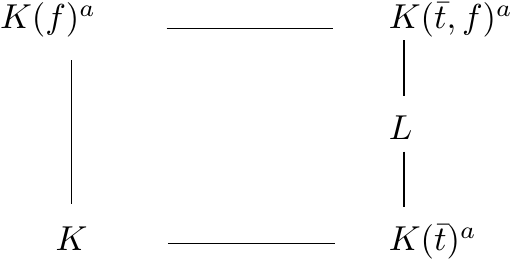}
\end{center}	

\noindent By Corollary \ref{deloncoro}, the extension $K\subseteq K(\bar{t})^a$ is separated and by Theorem \ref{delon2} $K(\bar{t})^a$ and $K(\bar{f})^a$ are linearly disjoint over $K$. Suppose now there exists some algebraically closed field $M$ such that $K\subseteq M\subset L$ (hence $td(M,K)\leq n$) and the extension $(K\subseteq M, v)$ is not separated. Let $m$ be a lifting of $M/v$ in $M$ and $l$ a lifting of $L/v$ in $L$ such that $k\subseteq m\subseteq l$. Given that 

\begin{center}
	\begin{tabular}{lll}
$n$						&$=$& $td(l;k)$\\
							&$=$& $td(l;m)+td(m;k)$\\
							&$\leq$& $td(L;M)+td(K(m)^a;K)$\\
							&$=$& $td(L;K)-td(M;K(m)^a)$\\
							&$\leq$& $n+1-td(M;K(m)^a)$,\\
\end{tabular}
\end{center}

\noindent we have that $td(M;K(m)^a)\leq 1$. If $td(M;K(m)^a)=0$, then $M=K(m)^a$ and $(K\subseteq M,v)$ is separated by Corollary \ref{deloncoro}. Suppose towards a contradiction that $td(M;K(m)^a)=1$. This implies that $L=M(l)^a$. Therefore we have the following diagram: 

\begin{center}
	\includegraphics{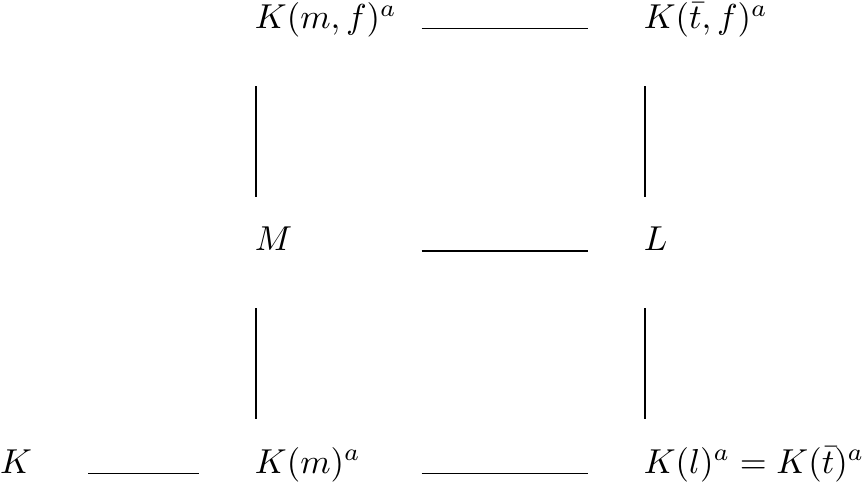}
\end{center}	

\noindent By Corollary \ref{deloncoro}, since $L=M(l)^a$, the extension $M\subseteq L$ is separated. Notice that by construction, $M\subseteq K(m,\bar{f})^a$ is an immediate extension. Therefore, by implication $(1\Rightarrow 1)$ of Theorem \ref{delon2}, $L$ and $K(m,\bar{f})^a$ are linearly disjoint over $M$. But this implies that the field of definition of the ideal
$$Ann(t_1,\ldots,t_n,t_{n+1};K(m,\bar{f})^a):=\{P\in K(m,\bar{f})^a[T_1,\ldots,T_n,X]: P(t_1,\ldots,t_{n+1})=0\}$$
is contained in $M$ (see for instance chapter III, Theorem 8 in \cite{lang:58}). On the other hand, this ideal is generated by the polynomial
$$X-\left(\sum_{i=1}^n T_if_i+f_{n+1}\right)$$
which implies that $f_1,\ldots,f_{n+1}$ belong to the field of definition. Since they are algebraically independent over $K$, $td(K(\bar{f});K)=n+1$ contradicts that $td(M;K)\leq n$. 

\

This construction provides valued fields of equal characteristic. To get the counterexample for models of $ACVF_{0,p}$ with $p$ a prime number, take for $k$ the algebraic closure of $\mathbb{Q}_p$ and replace $v$ by its composition with the $p$-adic valuation. \end{proof}

While the previous result shows that the Marker-Steinhorn theorem cannot be generalized to $ACVF$ and therefore cannot be generalized to $C$-minimality, the following remains true:

\begin{theorem}\label{absolute}
Let $K$ be a model of $ACVF$. Then, $T_1(K)$ implies $T_n(K)$ for all $n\geq 1$.
\end{theorem}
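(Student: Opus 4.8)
The strategy is to reduce Theorem~\ref{absolute} to the characterisation in Theorem~\ref{charACVF}. Suppose $T_1(K)$ holds and fix an $|K|^+$-saturated elementary extension $N \succ K$, so that $T_1(K,N)$ holds; since every $1$-type over $K$ is realised in $N$, in fact $T_1(K,N')$ holds for every elementary extension $N'$, and the reduction below, carried out for $N$, will apply verbatim to any $N'$. By Corollary~\ref{T1}, from $T_1(K,N)$ we obtain both that $T_1(vK,vN)$ holds and that $N$ contains no limit point over $K$.

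The crux is to upgrade the absence of limit points in $N$ to genuine maximality of $K$. Suppose $K$ were not maximal: then there is a nested family of nonempty balls of $K$ with empty intersection, and after passing to a cofinal subfamily we may assume it is indexed by a regular cardinal $\lambda \le |K|$ (there are only $\le |K|$ balls of $K$). Each of these balls is $K$-definable, and any finite subfamily has nonempty intersection because the family is nested, so by $|K|^+$-saturation there is $a \in N$ lying in all of them. Since the family has empty intersection in $K$ we have $a \notin K$, and one checks directly from the definition that $a$ is then a limit point over $K$, contradicting the previous paragraph. Hence $K$ is maximal.

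Now the pieces fit together. By Theorem~\ref{baur}, maximality of $K$ implies that every elementary extension $N' \succ K$ is a separated extension of $K$; and $T_1(K,N')$ implies $T_1(vK,vN')$ by the easy implication recorded in the proof of Corollary~\ref{T1W1}. Thus condition~(2) of Theorem~\ref{charACVF} holds for the pair $K \prec N'$, and condition~(1) yields $T_n(K,N')$ for all $n \ge 1$. Letting $N'$ range over all elementary extensions of $K$ gives $T_n(K)$ for every $n \ge 1$, as desired. The only non-formal step is the middle one: maximality is not first order, so recovering it from the behaviour of a single saturated model genuinely requires the saturation argument, whose one delicate point is ensuring that the relevant nested family of balls can be taken of size at most $|K|$ so that $|K|^+$-saturation applies.
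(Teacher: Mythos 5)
Your proof is correct and follows essentially the same route as the paper's: Corollary~\ref{T1} gives $T_1(vK,vL)$ and the absence of limit points, from which one deduces that $K$ is maximal, then Baur's Theorem~\ref{baur} gives separatedness of every extension, and Theorem~\ref{charACVF} concludes. The only difference is that you spell out, via the $|K|^+$-saturation/compactness argument on a nested family of at most $|K|$ balls, why the absence of limit points in a single saturated elementary extension already forces maximality of $K$ --- a step the paper's proof leaves implicit.
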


\begin{proof} Let $K$ be a model of $ACVF$ such that $T_1(K)$ holds. By \ref{T1}, we have that $T_1(vK,vL)$ and for all extensions $L$ of $K$, $L$ does not contain limit points over $K$. Therefore, $K$ is in particular maximal. By Baur's Theorem \ref{baur}, this implies that all extensions are separated. The result follow by \ref{charACVF}. \end{proof}	

Question \ref{absoluea} remains open. We do not know either of any counterexample to question \ref{absoluea} for weakly o-minimal theories. 

\bibliographystyle{amsplain}
\bibliography{merged}

\end{document}